\documentclass[12pt]{amsart}
\usepackage[myheadings]{fullpage}

\usepackage{amsmath,amssymb,amsthm}

\numberwithin{equation}{section}

\newtheorem{theorem}{Theorem}

\newtheorem{lemma}{Lemma}[section]
\newtheorem{prop}[lemma]{Proposition}

\theoremstyle{definition}
\newtheorem{question}[lemma]{Question}

\theoremstyle{remark}
\newtheorem{remark}[lemma]{Remark}
\newtheorem{example}[lemma]{Example}

\DeclareMathOperator{\id}{id}
\DeclareMathOperator{\vol}{vol}
\DeclareMathOperator{\dist}{dist}
\DeclareMathOperator{\trace}{trace}
\DeclareMathOperator{\diam}{diam}

\newcommand{\R}{\mathbb R}

\renewcommand{\phi}{\varphi}
\newcommand{\ep}{\varepsilon}
\newcommand{\pd}{\partial}

\newcommand{\be}{\begin{equation}}
\newcommand{\ee}{\end{equation}}

\begin{document}

\title{On Gromov--Hausdorff stability in a boundary rigidity problem}

\author{Sergei Ivanov}
\thanks{Supported by the Dynasty foundation and RFBR grant 09-01-12130-ofi-m}
\email{svivanov@pdmi.ras.ru}
\address{Saint Petersburg department of Steklov Math Institute,
191023, Fontanka 27, Saint Petersburg, Russia}
\subjclass[2000]{53C23}
\keywords{Boundary distance rigidity, Gromov--Hausdorff topology}

\begin{abstract}
Let $M$ be a compact Riemannian manifold with boundary.
We show that $M$ is Gromov--Hausdorff close to a
convex Euclidean region $D$ of the same dimension
if the boundary distance function of $M$ is $C^1$-close
to that of~$D$.
More generally, we prove the same result under the assumptions
that the boundary distance function of $M$ is $C^0$-close to
that of $D$, the volumes of $M$ and $D$ are almost equal,
and volumes of metric balls in $M$ have a certain lower bound
in terms of radius.
\end{abstract}

\maketitle

\section{Introduction}

Let $M$ be a compact Riemannian manifold with boundary.
For $x,y\in M$, we denote by $d_M(x,y)$ the Riemannian distance
between $x$ and $y$, that is the length of a shortest curve
connecting $x$ and~$y$. The \textit{boundary distance function},
denoted by $bd_M$, is the restriction of $d_M$ to $\pd M\times\pd M$.

In some cases $M$ is uniquely determined by $bd_M$ (up to an isometry
fixing the boundary); such Riemannian manifolds $M$ are called \textit{boundary rigid}.
Michel \cite{michel} conjectured that every simple Riemannian manifold
(that is, such that the boundary is strictly convex and all geodesics are minimizing
and free of conjugate points) is boundary rigid. This conjecture is proved
in dimension~2 by Pestov and Uhlmann \cite{pestov-uhlmann} and in some
partial cases in higher dimensions
(cf.\ \cite{michel}, \cite{gromov-frm}, \cite{BCG}, \cite{CK98}, \cite{BI10}).
In particular, it is shown in \cite{BI10} that,
if $M$ is a region in $\R^n$ with a Riemannian metric which
is sufficiently close (in~$C^2$) to the Euclidean metric $g_e$,
then $M$ is boundary rigid. In other words, if a Riemannian metric $g$ on $D$
defines the same boundary distance function as some almost Euclidean
metric $g'$, then $g$ is isometric to~$g'$.

This raises the following stability question: if the boundary
distance function of a metric $g$ is close to that of $g_e$ in a suitable
topology, is $g$ necessarily close to $g_e$ in $C^r$, $r\ge 2$
(up to an isometry fixing the boundary)?
The answer is known to be affirmative in a local variant of the question,
namely under the assumption that the $C^m$-norm of $g$, for a suitable $m>r$,
is a priori bounded (cf.\ \cite{W} and, for a more general result,~\cite{SU}).
However the global stability question (without further assumptions on $g$) remains open.

In this paper we give an affirmative answer to a weaker variant
of this question, namely we show that $g$ is close to $g_e$ in the
Gromov--Hausdorff topology.
The assumptions on the boundary distance function are also relatively weak:
it should be only $C^1$-close to the boundary distance function of the Euclidean metric.
The precise statement is the following:

\begin{theorem}\label{t-C1}
Let $D\subset\R^n$ be a strictly convex compact region with a smooth boundary.
Then for every $\ep>0$ there exists $\delta>0$ such that the following holds.
Let $M$ be a Riemannian manifold such that $\pd M=\pd D$,
$bd_M$ is $C^1$-smooth on $\pd D\times\pd D\setminus\Delta$
where $\Delta$ is the diagonal of $\pd D\times\pd D$,
and
$$
\|bd_M-bd_D\|_{C^1(\pd D\times\pd D\setminus\Delta)}<\delta .
$$
Then $d_{GH}(M,D)<\ep$ where $d_{GH}$ is the Gromov--Hausdorff distance.
\end{theorem}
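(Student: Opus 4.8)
The plan is to deduce Theorem~\ref{t-C1} from the more general statement mentioned in the abstract, where the $C^1$ bound on $bd_M$ is replaced by $C^0$-closeness together with the assumptions that $|\vol M-\vol D|$ is small and that $\vol(B_M(x,r))\ge w(r)$ for every $x\in M$ and every $r\le\diam M$, with $w$ a fixed modulus satisfying $w(r)\ge cr^n$ for small $r$. The first task is then to check that $\|bd_M-bd_D\|_{C^1}<\delta$ forces these volume conditions. The mechanism is that the gradient of $bd_M$ at a pair $(p,q)\in\pd D\times\pd D\setminus\Delta$ records the directions in which the minimizing $M$-geodesic $\gamma_{pq}$ leaves $p$ and arrives at $q$, while its value records the length; since $D$ is strictly convex, the Euclidean chord $[p,q]$ meets $\pd D$ at angles bounded away from $0$ once $p$ and $q$ avoid a neighborhood of the diagonal, so $C^1$-closeness pins down both the length and the endpoint directions of $\gamma_{pq}$. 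A stability argument for nearly straight geodesics then shows that the family $(p,q,t)\mapsto\gamma_{pq}(t)$ sweeps out $M$ with Jacobian close to the Euclidean one, and the volume comparison and the ball lower bound follow by a coarea computation, pairs $(p,q)$ near the diagonal giving short geodesics that only fill a thin collar of $\pd M$. Because $C^1$-closeness is a strong hypothesis --- it already forces each minimizing boundary geodesic to be nearly straight and so rules out obstacles a priori --- I expect this reduction to be more tractable than the general statement itself.

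For the general statement I would work in the Banach space $B=L^\infty(\pd D)$. Convexity of $D$ makes $R_D\colon D\to B$, defined by $R_D(z)(p)=|z-p|$, an isometric embedding: the reverse triangle inequality gives $\|R_D(z)-R_D(z')\|_\infty\le|z-z'|$, and equality holds for $p$ on the ray from $z'$ through $z$. The companion map $R_M\colon M\to B$, $R_M(x)(p)=d_M(x,p)$ --- well defined since $\pd M=\pd D$ --- is $1$-Lipschitz. The conclusion $d_{GH}(M,D)<\ep$ will follow from three assertions: (I) $R_M(M)$ lies in the $\ep$-neighborhood of $R_D(D)$, yielding a measurable $\pi\colon M\to D$ with $\|R_M(x)-R_D(\pi x)\|<\ep$; (II) $R_D(D)$ lies in the $\ep$-neighborhood of $R_M(M)$, yielding $\psi\colon D\to M$ with $\|R_D(z)-R_M(\psi z)\|<\ep$; (III) $R_M$ is $\ep$-co-Lipschitz, i.e.\ $\|R_M(x)-R_M(y)\|_\infty\ge d_M(x,y)-\ep$. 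A short formal computation then shows that $\pi$ and $\psi$ each distort distances by $O(\ep)$ and that $\pi\circ\psi$ and $\psi\circ\pi$ are within $O(\ep)$ of the respective identities, so $d_{GH}(M,D)=O(\ep)$.

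All three assertions reduce to one structural input, which is the crux: under the hypotheses of the general statement, $M$ is \emph{almost simple}, meaning that every point of $M$ lies $\ep$-close to a minimizing geodesic $\gamma_{pq}$ between boundary points, that each $\gamma_{pq}$ reproduces the chord $[p,q]$ up to $\ep$ in length, endpoint directions and trajectory, and that a minimizing geodesic issued from an interior point and prolonged stays minimizing until it reaches $\pd M$, again up to a controlled error. Granting this: for (I) one locates $x$ near some $\gamma_{pq}$ at parameter $t$, takes $z$ to be the point of $[p,q]$ at parameter $t$, and checks $d_M(x,p')\approx|z-p'|$ for all $p'\in\pd D$ by applying almost-simplicity to the geodesic from $\gamma_{pq}(t)$ to $p'$; (II) is the mirror statement, with $\psi(z)=\gamma_{pq}(|z-p|)$ for a chord $[p,q]$ through $z$; and (III) follows by prolonging a minimizing geodesic from $x$ through $y$ until it reaches $\pd M$ at some $p$, so that $d_M(x,p)\approx d_M(x,y)+d_M(y,p)$ and hence $R_M(x)(p)-R_M(y)(p)\approx d_M(x,y)$.

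The main obstacle is to establish almost-simplicity from $\|bd_M-bd_D\|_{C^0}<\delta$ and the two volume bounds. Since $C^0$-closeness only constrains minimizing boundary-to-boundary geodesics, the real work is to exclude hidden structure --- obstacles, thin spikes, trapped regions, or caustics screening off part of $M$ --- and this is exactly what the volume hypotheses are for: the ball lower bound rules out thin or collapsed pieces (a long thin spike carries a ball around its tip of far less than Euclidean volume), while the volume equality rules out fat excess and forces the near-chord family to fill almost all of $M$. I would close the gap either by a direct quantitative estimate along these lines or by a compactness argument --- a sequence of putative counterexamples is precompact in the Gromov--Hausdorff topology by the ball bound and the resulting diameter bound, and subconverges to an object contradicting filling rigidity near the flat metric. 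Throughout one must handle the usual technical nuisances: the functions $d_M(\cdot,p)$ are Lipschitz but not smooth across cut loci, $\pi$ and $\psi$ are only measurable, and pairs near the diagonal of $\pd D\times\pd D$ always need the short-geodesic treatment.
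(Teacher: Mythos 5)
Your overall strategy---deduce the $C^1$ theorem from a $C^0$-plus-volume statement---matches the paper's (Theorem~\ref{t-C0}), but both halves of your argument have genuine gaps. In the reduction, you assert that $C^1$-closeness of $bd_M$ makes the sweep map $(p,q,t)\mapsto\gamma_{pq}(t)$ cover $M$ with Jacobian close to the Euclidean one, and you extract both volume hypotheses from a coarea computation. The $C^1$ data on $bd_M$ controls only the lengths and endpoint directions of minimizing boundary-to-boundary geodesics; it gives no control whatsoever on Jacobi fields, conjugate points, or the interior behaviour of the sweep, so the Jacobian claim is not obtainable from the hypotheses (it would amount to metric closeness of $M$ to $D$, which is essentially the conclusion). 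It also does not address whether the sweep covers $M$ at all: a priori there can be trapped geodesics and regions unreachable by boundary minimizers. The paper avoids Jacobian control entirely: differentiability of $bd_M$ forces every geodesic issued from the boundary to be minimizing (a degree argument, Lemma~\ref{l-bg-minimizing}), strict convexity of $D$ then confines non-minimizing geodesics to an $\ep(\delta)$-collar of $\pd M$ (Lemma~\ref{l-no-infinite-geodesics}), Croke's local isoembolic inequality (Proposition~\ref{p-isoembolic}) gives the ball lower bound for balls avoiding $\pd M$, and Santal\'o's exact formula (Proposition~\ref{p-santalo}) gives an upper bound on $\vol(M\setminus U_\delta(\pd M))$. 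This is also why the hypotheses of Theorem~\ref{t-C0} are stated only for the volume away from a collar and only for balls disjoint from $\pd M$: your stronger requirements ($|\vol M-\vol D|$ small, ball bound for \emph{all} $x$ and $r\le\diam M$) cannot be verified from the $C^1$ hypothesis, since nothing excludes extra structure hidden in the collar.

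For the $C^0$ statement itself, your $L^\infty(\pd D)$ embedding scheme with assertions (I)--(III) is a reasonable skeleton, but everything is made to rest on the ``almost simplicity'' of $M$, and that is exactly the hard core of the theorem, which you leave unproved (``I would close the gap either by a direct quantitative estimate \dots or by a compactness argument''). The compactness route runs into precisely the difficulty the paper identifies as open: a Gromov--Hausdorff limit of such manifolds is merely a metric space, and one has no structure on it with which to run a Besicovitch/filling-rigidity equality argument. The paper's actual proof of Theorem~\ref{t-C0} is a quantitative construction with no simplicity statement at all: distance-like coordinates $\phi_v(x)=\inf_y\{d_M(x,y)+L_v(F(y))\}$ assembled into a volume-non-increasing map $\phi:M\to\R^n$, a doubling trick into $\R^{2n}$ to prove the key compatibility $\phi_v\approx L_v\circ\phi$ (assertion \eqref{e-phiv}), degree-mod-2 and volume arguments to show $\phi$ nearly fills $D$, and a chain-of-balls argument (Lemma~\ref{l-phi-almost-injective}, where the ball lower bound enters) to show $\phi$ is almost injective and hence an $\ep$-approximation. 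None of these mechanisms, or substitutes for them, appear in your proposal, so the central step of the theorem is missing.
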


Here ``strictly convex'' means that $\pd D$ contains no
straight line segment.
We refer to \cite[\S3A]{Gro99} or \cite[\S7.3]{BBI} for the definition
of the Gromov--Hausdorff distance. For the purposes of this paper,
the following criterion is sufficient \cite[Corollary 7.3.28]{BBI}:
for metric spaces $X$ and $Y$, one has $d_{GH}(X,Y)\le 2\ep$ if there
is a map $f:X\to Y$ such that $f(X)$ is an $\ep$-net in $Y$
(that is, the $\ep$-neighborhood of $f(X)$ covers $Y$) and
$$
|d_Y(f(x),f(x'))-d_X(x,x')|\le\ep
$$
for all $x,x'\in X$. Such maps are referred to as \textit{$\ep$-approximations}.

The boundary distance function $bd_M$ is not differentiable at the diagonal;
this is why the theorem involves
the $C^1$ norm on $\pd D\times\pd D\setminus\Delta$. Alternatively, one may require that
$\|bd_M^2-bd_D^2\|_{C^1}$ is small and the metric tensors of $M$ and $D$
restricted to $\pd D$ are $C^0$-close to each other.

Theorem \ref{t-C1} is proved in section \ref{sec-c1proof}. Here is a sketch
of the proof.
For simplicity, assume that $\pd M$ is strictly convex
(a non-convex boundary requires more technical details,
see section~\ref{sec-c1proof}).
Then the fact that $bd_M$ is $C^1$ implies that all
geodesics in $M$ are minimizing.
For such metrics, Santal\'o's integral geometric formula
(cf.\ section \ref{sec-santalo}) allows one to express the total volume
of $M$ in terms of the boundary distance function and its derivatives.
Applying this formula to $M$ and $D$ yields that $\vol(M)\approx\vol(D)$.
Since all geodesics in $M$ are minimizing, the exponential map at every
point is injective. Then Croke's local isoembolic inequality
(cf.\ section \ref{sec-isoembolic})
yields a uniform lower
bound for volumes of metric balls in $M$ in terms of radii.
With these observations, Theorem~\ref{t-C1} follows from
Theorem \ref{t-C0} (see below) which requires only $C^0$ closeness
of the boundary distance functions but includes volume related assumptions.

For a set $A\subset M$ and $r>0$, we denote by $U_r(A)$
the metric $r$-neighborhood of $A$, that is,
$$
U_r(A)=\{x\in M:\dist_M(x,A)<r\}
$$
where $\dist_M(x,A)=\inf_{y\in A} d_M(x,y)$.
By $B_r(x)$ we denote the metric ball
of radius $r$ centered at~$x\in M$;
that is $B_r(x)=U_r(\{x\})$.

\begin{theorem}\label{t-C0}
Let $D\subset\R^n$ be a convex compact region
and $\lambda>0$ a positive constant.
Then for every $\ep>0$ there exists $\delta=\delta(D,\lambda,\ep)>0$
such that the following holds:
if a compact Riemannian $n$-manifold $M$ 
with $\pd M=\pd D$ satisfies
\begin{enumerate}
 \item $|d_M(x,y) - d_D(x,y)| < \delta$ for all $x,y\in\pd M=\pd D$,
 \item  $\vol(M\setminus U_\delta(\pd M))<\vol(D)+\delta$,
 \item  $\vol(B_r(x))\ge\lambda r^n$ for all $x\in M$
and all $r\ge\delta$ such that $B_r(x)\cap \pd M=\emptyset$,
\end{enumerate}
then $d_{GH}(M,D)<\ep$.
\end{theorem}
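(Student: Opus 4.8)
The plan is to exhibit, for a given $\ep$, an $\ep$-approximation $f\colon M\to D$ in the sense of the criterion recalled above. All data will be extracted from distances to the boundary, through the maps $\Phi_M\colon M\to C(\pd M)$, $\Phi_M(x)=d_M(x,\cdot)|_{\pd M}$, and $\Phi_D\colon D\to C(\pd D)$, $\Phi_D(q)=d_D(q,\cdot)|_{\pd D}$; identifying $\pd M=\pd D$, I regard both images as lying in the single Banach space $C(\pd D)$ with the uniform norm. Each $\Phi_M(x)$ is $1$-Lipschitz on $\pd M$, and $\Phi_M$ is $1$-Lipschitz as a map $M\to C(\pd D)$; hypothesis (1) says $\|\Phi_M(p)-\Phi_D(p)\|_\infty<\delta$ for every $p\in\pd D$. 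The one soft but essential use of convexity of $D$ is that $\Phi_D$ is an \emph{isometric} embedding: for $q,q'\in D$, prolonging the segment $[q,q']$ past $q'$ to a boundary point realizes $\sup_{p\in\pd D}\bigl(|q-p|-|q'-p|\bigr)=|q-q'|$, which by the triangle inequality equals $\|\Phi_D(q)-\Phi_D(q')\|_\infty$. I would then define $f(x)$ to be a point of $D$ with $\|\Phi_D(f(x))-\Phi_M(x)\|_\infty$ within $\delta$ of $\dist(\Phi_M(x),\Phi_D(D))$; replacing $\pd D$ by a fixed fine finite net $p_1,\dots,p_N$ and using trilateration in $\R^N$, one can take such an $f$ to be continuous, equal to $\id_{\pd D}$ up to $O(\delta)$ on $\pd M$, and $(1+\sigma)$-Lipschitz on $M_0:=M\setminus U_\delta(\pd M)$ for a small $\sigma$. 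Being $O(\delta)$-close to the identity on the boundary, $f$ represents a degree-one map of pairs $(M,\pd M)\to(D,\pd D)$.

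Using that $\Phi_D$ is an isometric embedding and that the two ``boundaries'' $\Phi_M(\pd M),\Phi_D(\pd D)$ are $\delta$-close, both requirements of the criterion reduce to two statements about the images in $C(\pd D)$: (a) $\Phi_M(M)$ and $\Phi_D(D)$ are Hausdorff $\ep$-close, and (b) $\Phi_M$ does not collapse distances, $\|\Phi_M(x)-\Phi_M(x')\|_\infty\ge d_M(x,x')-\ep$. The part of (a) near the boundary — every $\Phi_M(x)$ with $x$ in the collar $U_{\delta'}(\pd M)$ is $O(\delta')$-close to $\Phi_D(p)$ for a nearby $p\in\pd M$, and conversely — is immediate from (1). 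One also gets for free from (2) and (3) a uniform diameter bound $\diam M\le C(D,\lambda)$: a point at distance $\rho$ from $\pd M$ forces a $\pd M$-free ball of radius close to $\rho$, whence $\lambda\rho^n\lesssim\vol(D)$, so $\Phi_M(M)$ lies in a fixed bounded part of $C(\pd D)$. The remaining content of (a) for interior points, and all of (b), is where (2) and (3) are genuinely used.

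This is the step I expect to be the main obstacle, and it is exactly where one must rule out that $M$ ``folds'' onto $D$. The engine is a single volume-pinching estimate. Since $f$ is continuous, degree one, and $O(\delta)$-close to $\id$ on $\pd M$, it is surjective, and its image of the thin collar lies $O(\delta)$-close to $\pd D$; hence $f(M_0)$ contains $D$ minus a thin neighbourhood of $\pd D$, so $\vol(f(M_0))\ge\vol(D)-\omega(\delta)$. On the other hand a $(1+\sigma)$-Lipschitz map multiplies $n$-volume by at most $(1+\sigma)^n$, so by (2) $\vol(f(M_0))\le(1+\sigma)^n(\vol(D)+\delta)$. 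Consequently $\int_{M_0}(1-|\det Df|)$ is small, and since every singular value of $Df$ is $\le 1+\sigma$, the differential $Df$ is within $o(1)$ of a linear isometry outside a set $E\subset M_0$ of small volume. I would then upgrade this to the global statements: if some interior $x,x'$ had $d_M(x,x')$ much larger than $\|\Phi_M(x)-\Phi_M(x')\|_\infty$ (hence much larger than $d_D(f(x),f(x'))$), the disjoint balls $B_r(x),B_r(x')$ with $r\sim\ep$, which avoid $\pd M$ and so have volume $\ge\lambda r^n$ by (3), would be crammed by $f$ into one ball of radius $\sim\ep$ in $D$; since $f$ is volume non-increasing with $|\det Df|\approx 1$ off the small set $E$, this double-counts part of $D$ and overshoots $\vol(D)+\delta$, contradicting (2). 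The same accounting shows $\Phi_M(M)$ is $\ep$-dense in $\Phi_D(D)$, finishing (a) and (b).

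With (a) and (b) in hand, $f$ distorts distances by $O(\ep+\sigma+\delta)$ and $f(M)$ is $O(\ep)$-dense; the leftover contributions — the collar $U_{\delta'}(\pd M)$, the bad set $E$, and the discretization parameter $\sigma$ — are all controlled by $\ep$ once $\delta=\delta(D,\lambda,\ep)$ is chosen small. Hence $f$ is an $\ep$-approximation and $d_{GH}(M,D)<\ep$.
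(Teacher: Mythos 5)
Your framing is sound up to a point: the observation that convexity makes $\Phi_D$ an isometric embedding into $C(\pd D)$, and the reduction of the theorem to (a) Hausdorff closeness of $\Phi_M(M)$ and $\Phi_D(D)$ plus (b) a lower bound $\|\Phi_M(x)-\Phi_M(x')\|_\infty\ge d_M(x,x')-\ep$, are correct, and your overall architecture (an almost volume-preserving, degree-one map plus a pinching argument) parallels the paper. The first genuine gap is the construction of $f$. Your entire engine needs $f$ to be honestly $(1+\sigma)$-Lipschitz (or at least a.e.\ differentiable with $n$-Jacobian $\le(1+\sigma)^n$) on $M_0$, and nothing in ``approximate nearest point of $\Phi_D(D)$ to $\Phi_M(x)$, via a finite net and trilateration'' delivers that: metric projection onto the non-convex set $\Phi_D(D)$ in the sup norm is $1$-Lipschitz only up to an additive error of order $\dist(\Phi_M(x),\Phi_D(D))$ plus the net mesh, and an additive-error near-isometry gives no pointwise Lipschitz bound at small scales, hence no Rademacher differentiability, no Jacobian, no area formula; so ``$\int_{M_0}(1-|\det Df|)$ is small'' and ``$Df$ is close to a linear isometry off a small set'' are unsupported. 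Worse, controlling $\dist(\Phi_M(x),\Phi_D(D))$ for interior $x$ is itself part of what must be proved (your (a); it fails without hypotheses (2)--(3), cf.\ the cylinder example), so it cannot be used to set $f$ up. The paper gets around exactly this by taking coordinates which are $1$-Lipschitz inf-convolutions $\phi_v(x)=\inf_y\{d_M(x,y)+L_v(F(y))\}$, so that volume non-increase is automatic, and the genuinely hard point --- that these coordinates are compatible with the linear functionals $L_v$ on the target, assertion \eqref{e-phiv} --- is proved by the auxiliary map into $\R^{2n}$ and a trace--determinant estimate (Lemma \ref{l-Phi-vol}, Lemma \ref{Phi-nbhd}). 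Your proposal contains no substitute for this step.

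The second gap is quantitative and sits exactly at your ``main obstacle''. Even granting an almost volume-preserving, volume non-increasing $f$ (the analogue of \eqref{e-phi-vol}), the two-ball double counting does not close: hypothesis (3) only gives $\vol(B_r(x))\ge\lambda r^n$ with $\lambda$ possibly far smaller than the Euclidean ball constant, so packing two such balls into a single Euclidean ball of radius about $(1+\sigma)r$ contradicts nothing, and in particular does not ``overshoot $\vol(D)+\delta$''. To beat an arbitrary $\lambda$ one must pack $N\sim r/\rho$ disjoint balls of a much smaller radius $\rho$ along a curve joining $x$ to $x'$ whose image stays in a ball of radius comparable to $\rho$, so that the packed volume $\sim\lambda r\rho^{n-1}$ eventually exceeds $C\rho^n$; this is the paper's Lemma \ref{l-phi-almost-injective}, and the existence of such a connecting curve with controlled image is itself a mod-$2$ degree argument resting on the volume hypotheses (Lemma \ref{l-curve}). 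Your sketch produces no such curve, and without it the ``same accounting'' claimed to give density of the image and the lower distance bound (b) does not go through. Finally, even with near-injectivity one still needs the boundary-directed argument (the analogues of \eqref{e-phi-maxlip}, Lemma \ref{l-dist-dm} and Lemma \ref{l-phi-almost-isometry}) to convert it into $d_M(x,x')\le\|\Phi_M(x)-\Phi_M(x')\|_\infty+\ep$ for all pairs, including those near $\pd M$; this layer is absent from the proposal as well.
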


\begin{remark}
If $d_M(x,y) \ge d_D(x,y)$ for all $x,y\in\pd M=\pd D$,
then $\vol(M)\ge\vol(D)$ and the equality $\vol(M)=\vol(D)$
implies that $M$ is isometric to $D$. This can be shown as follows.
One may assume that $D$ is contained in the unit cube $I^n$.
Replacing $D\subset I^n$ by $M$ yields a piecewise Riemannian
manifold whose boundary is identified with $\pd I^n$ in such a way
that the distances between opposite faces are no less than~1.
Then Besikovitch inequality \cite{Bes} (see also \cite[\S7.1]{gromov-frm})
implies that the volume of this space is at least~1, and in the
case of equality the space must be isometric to~$I^n$.
Thus Theorem \ref{t-C0} is in a sense a stability estimate
in the equality case of Besikovitch inequality.
\end{remark}

The following example shows that both volume assumptions in Theorem \ref{t-C0} are necessary.

\begin{example}
Let $D\subset\R^n$ be the standard unit ball and $D_r$ be a
ball of radius $r\ll\delta$ with the same center. Remove $D_r$ from $D$
and replace it with either a big round $n$-dimensional sphere with
a similar ball removed,
or a closed-up cylinder $(\pd D_r\times [0,L])\cup (D_r\times\{L\})$
where $L\gg 1$.
Smoothening the resulting piecewise Riemannian metric yields an example
of $M$ such that $\|bd_M-bd_D\|_{C^0}<\delta$ but $d_{GH}(M,D)\ge 1$.
Only the second assumption of Theorem~\ref{t-C0} is violated in the big sphere example,
and only the third one in the cylinder example.
\end{example}

Theorem \ref{t-C0} is proved in sections \ref{sec-coord} and \ref{sec-c0proof}.
In section \ref{sec-coord} we use special distance-like functions on $M$
to construct a Lipschitz map $\phi:M\to\R^n$ which is volume non-increasing and
whose image approximates~$D$.
Then in section \ref{sec-c0proof} we show that
$\phi$ almost preserves distances up to a small additive term, and hence
is an $\ep$-approximation. 
The key points of the proof are the assertion \eqref{e-phiv}
of Proposition \ref{p-phi} and Lemma \ref{l-phi-almost-injective}.

\begin{remark}
The first assumption in Theorem \ref{t-C0}
can be replaced by a weaker one
that does not require identifying $\pd M$ with $\pd D$:
there is a continuous map $F:\pd M\to\pd D$
of nonzero degree mod 2 such that
$
 |d_M(x,y) - d_{\R^n}(F(x),F(y))| < \delta
$
for all $x,y\in\pd M$.
This is what is actually used in the proof, see section~\ref{sec-coord}.
\end{remark}

\noindent\textit{Acknowledgement}.
Many ideas used in this paper arose from joint work with Dima Burago
(\cite{BI10} and \cite{BI-new}).
I am grateful to Anton Petrunin who suggested a simple proof
of Lemma \ref{l-aux1}
and to Sergei Buyalo for his helpful remarks about the text.

\section{Preliminaries}
\label{sec-prelim}

In this section we state some results
used throughout the paper.

\subsection{Area inequality}
Let $M^n$ and $M_1^{n_1}$ be Riemannian manifolds
and $f\colon M\to M_1$ a Lipschitz map.
By Rademacher's Theorem (cf.~\cite[3.1.6]{federer}),
$f$ is differentiable almost everywhere on $M$.
Let $x\in M$ be a point where $f$ is differentiable.
The ($n$-dimensional) Jacobian of $f$ at $x$,
denoted by $Jf(x)$, is the $n$-dimensional volume
of the image of a unit cube in $T_xM$ under
the derivative $d_xf\colon T_xM\to T_{f(x)}M_1$.
We need the following inequality
which is an easy corollary of the area formula
for Lipschitz maps \cite[3.2.3]{federer}.

\begin{prop}
For every measurable set $A\subset M$, one has
$$
\vol_n(f(A)) \le \int_A Jf(x)\,d\vol_n(x)
$$
where $\vol_n$ denotes the $n$-dimensional
Hausdorff measure.
In particular, if $Jf\le 1$ a.e., then $f$ does not increase
$n$-dimensional volumes.
\end{prop}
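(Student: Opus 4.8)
The plan is to deduce this from the area formula for Lipschitz maps in its multiplicity form. Concretely, I would use the following Riemannian version of \cite[3.2.3]{federer}: for a Lipschitz map $f\colon M^n\to M_1^{n_1}$ and a measurable set $A\subset M$,
$$
\int_A Jf(x)\,d\vol_n(x)=\int_{M_1}\#\bigl(A\cap f^{-1}(y)\bigr)\,d\vol_n(y),
$$
where $\#(\cdot)$ is cardinality (the value $+\infty$ allowed) and both $\vol_n$'s are the respective $n$-dimensional Hausdorff measures. Granting this identity, the proposition is immediate: for every $y\in f(A)$ the fibre $A\cap f^{-1}(y)$ is nonempty, so the integrand on the right is $\ge\mathbf 1_{f(A)}(y)$ pointwise; integrating this inequality gives
$$
\int_A Jf\,d\vol_n\ \ge\ \int_{M_1}\mathbf 1_{f(A)}\,d\vol_n\ =\ \vol_n(f(A)).
$$
The ``in particular'' clause then follows by applying the inequality with $Jf\le1$ a.e., which bounds the left-hand side by $\int_A 1\,d\vol_n=\vol_n(A)$ for every measurable $A$. (If $n>n_1$ both sides are automatically zero, so the case of interest is $n\le n_1$.)

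The work therefore reduces to justifying the displayed Riemannian area formula from its Euclidean counterpart. First I would fix countable atlases of $M$ and $M_1$ and, using a measurable partition of $A$ subordinate to these charts together with countable additivity of both sides of the identity, reduce to the case where $A$ lies in a single chart $U\subset M$ and $f(A)$ lies in a single chart $V\subset M_1$. Identifying $U$ with an open subset of $\R^n$ and $V$ with an open subset of $\R^{n_1}$, the map $f$ becomes Lipschitz between Euclidean domains, and \cite[3.2.3]{federer} applies verbatim with the coordinate Jacobian and Lebesgue/Hausdorff measures. It then remains to compare intrinsic and coordinate quantities: the Riemannian volume on $U$ equals $\sqrt{\det g}$ times Lebesgue measure, the Riemannian $n$-dimensional Hausdorff measure on $M_1$ restricted to $f(A)$ agrees with the one computed from the chart metric $g_1$, and the intrinsic Jacobian $Jf(x)$ defined via orthonormal bases differs from the coordinate Jacobian by precisely the ratio of these density factors $\sqrt{\det g_1(f(x))}/\sqrt{\det g(x)}$. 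Substituting these relations into the Euclidean area formula yields the Riemannian identity above.

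The only genuine subtlety is the measurability of $f(A)$, needed to make sense of $\vol_n(f(A))$. Since $A$ is measurable and $f$ is continuous, $f(A)$ is an analytic (Suslin) set, hence measurable with respect to the Borel-regular outer measure $\vol_n$; alternatively one simply treats $\vol_n$ as an outer measure throughout, in which case no measurability of $f(A)$ is needed for the inequality. I expect this measure-theoretic bookkeeping, together with the chart-matching of density factors, to be the only points requiring care — the core of the statement, the multiplicity formula, is standard.
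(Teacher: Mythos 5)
Your argument is correct and is exactly the route the paper intends: the paper gives no proof, simply calling the proposition an easy corollary of the area formula \cite[3.2.3]{federer}, and your multiplicity-function estimate $\#\bigl(A\cap f^{-1}(y)\bigr)\ge\mathbf 1_{f(A)}(y)$ is the standard way to extract it. The chart-reduction and measurability remarks are fine but inessential bookkeeping.
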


\subsection{Santal\'o's formula}
\label{sec-santalo}

In order to deduce Theorem \ref{t-C1} from Theorem \ref{t-C0},
we need some integral geometry in the space of geodesics.
Let $M$ be a compact Riemannian manifold with boundary.
We denote by $SM$ the unit tangent bundle of $M$.
For $p\in\pd M$, denote by $\nu(p)$ the unit inner
normal to $\pd M$ and by $S^+_pM$ a hemisphere in
$T_pM$ defined by
$$
S^+_pM=\{v\in S_pM:\langle v,\nu(p)\rangle\ge 0\} .
$$

By a \textit{geodesic} in $M$ we mean a unit-speed
curve in $M$ which is a geodesic of the Riemannian
metric and does not have points on $\pd M$ except possibly endpoints.
For a unit tangent vector $v\in S_pM$, we denote by $\gamma_v$ the
maximal geodesic $\gamma:[0,a]\to M$ or $\gamma:[0,+\infty)\to M$
defined by initial data $\gamma(0)=p$ and $\dot\gamma(0)=v$.
The length of $\gamma_v$ is denoted by $\ell(v)$ or $\ell_M(v)$.

The standard Liouville measure $\mu_L$ on $SM$ is defined by
$$
 \mu_L(A) = \int_M \vol_{S_pM} (A\cap S_pM) \, d\vol_M(p) 
$$
for every measurable set $A\subset SM$,
where $\vol_{S_pM}$ is the standard $(n-1)$-dimensional
volume on the (Euclidean) sphere $S_pM$.
In particular, $\mu_L(SM)=\omega_{n-1} \vol(M)$
where $\omega_{n-1}$ is the volume of the unit sphere in $\R^n$.
The Liouville measure of a set invariant under a geodesic flow
can be recovered from its slice by the boundary
(cf.\ \cite[\S\S19.4--19.5]{Santalo}, \cite{michel}, or \cite[p.~60]{gromov-frm}),
namely the following holds

\begin{prop}
\label{p-santalo}
Let $A\subset \bigcup_{p\in\pd M} S^+_pM$ be a Borel measurable set
and let $\Phi(A)\subset SM$ be the trajectory of $A$ under the
geodesic flow (that is, $\Phi(A)$ is the set of velocity vectors
of all geodesics of the form $\gamma_v$ where $v\in A$). Then
$$
 \mu_L(\Phi(A)) = \int_{\pd M} d\vol_{\pd M}(p)
 \int_{A\cap S^+_pM} \ell(v) \cos\angle (v,\nu(p))\, d\vol_{S_pM}(v) .
$$
\end{prop}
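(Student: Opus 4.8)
The plan is to exhibit $\Phi(A)$ as the image of the geodesic flow applied to $A$, and to evaluate its Liouville measure by integrating, along flow lines, the ``flux'' of $\mu_L$ through the boundary. Write $N=SM|_{\pd M}$ for the unit tangent bundle over $\pd M$, $N^+=\bigcup_{p\in\pd M}S^+_pM$ for the inward part, and let $\phi_t$ denote the (partially defined) geodesic flow on $SM$, so $\phi_t(v)=\dot\gamma_v(t)$. Consider
$$
F\colon\Omega_A\to SM,\qquad F(v,t)=\phi_t(v),\qquad
\Omega_A=\{(v,t)\colon v\in A,\ 0\le t\le\ell(v)\}
$$
(with $t$ ranging over $[0,+\infty)$ when $\ell(v)=+\infty$). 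By the definition of $\Phi(A)$, the image of $F$ is exactly $\Phi(A)$. First I would check that $F$ is injective off a null set: if $\phi_{t_1}(v_1)=\phi_{t_2}(v_2)$ with $t_1\ge t_2$, then $v_2=\phi_{t_1-t_2}(v_1)$ lies over $\gamma_{v_1}(t_1-t_2)$, and since a geodesic meets $\pd M$ only at its endpoints while $v_2\in N^+$ is based on $\pd M$, we get either $t_1=t_2$ (so $v_1=v_2$) or $t_1-t_2=\ell(v_1)$ with $v_2$ tangent to $\pd M$; the tangent vectors form a null subset of $N^+$. Equip $N^+$ with the measure $d\mu_\pd(v)=\cos\angle(v,\nu(p))\,d\vol_{S_pM}(v)\,d\vol_{\pd M}(p)$. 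Then the proposition reduces, by Fubini (note $(\mu_\pd\otimes dt)(\Omega_A)=\int_A\ell(v)\,d\mu_\pd(v)$), to the claim that $F$ pushes $\mu_\pd\otimes dt$ forward to the restriction of $\mu_L$ to $\Phi(A)$; equivalently, that the Jacobian of $F$ with respect to these measures equals $1$ where $F$ is smooth.

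To prove this I would combine two ingredients. \textbf{(i)} Liouville's theorem: the geodesic flow preserves $\mu_L$, i.e.\ $\phi_s^*\mu_L=\mu_L$ wherever defined. Since $F(v,t+s)=\phi_s(F(v,t))$, this forces $F^*\mu_L$ to be invariant under the shift $(v,t)\mapsto(v,t+s)$, hence of the form $\pi^*\rho\otimes dt$ for a measure $\rho$ on $N^+$, where $\pi\colon\Omega_A\to N^+$ is the projection. \textbf{(ii)} Identification of $\rho$ as the boundary flux. Because $F(\cdot,0)$ is the inclusion $N^+\hookrightarrow SM$ and $\pd_tF(\cdot,0)$ is the geodesic vector field $X$, evaluating $F^*\mu_L=\pi^*\rho\otimes dt$ at $t=0$ gives $\rho=(i_X\mu_L)|_{N^+}$. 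I would then compute $i_X\mu_L|_N$ in Fermi coordinates $(x^1,\dots,x^{n-1},x^n)$ adapted to $\pd M$, so that $x^n=\dist(\,\cdot\,,\pd M)$, $\pd_{x^n}=\nu$ along $\pd M$, and $\sqrt{\det g}=\sqrt{\det g_{\pd M}}$ on $\{x^n=0\}$: together with the round fiber coordinate one has $\mu_L=\sqrt{\det g}\,dx^1\wedge\dots\wedge dx^n\wedge d\vol_{S_pM}$ and $X=\sum_i v^i\pd_{x^i}+(\text{vertical})$, so contracting with $X$ and pulling back to $\{x^n=0\}$ annihilates every term containing $dx^n$ and leaves (up to orientation) $v^n\sqrt{\det g_{\pd M}}\,dx^1\wedge\dots\wedge dx^{n-1}\wedge d\vol_{S_pM}$, i.e.
$$
i_X\mu_L|_N=\cos\angle(v,\nu(p))\,d\vol_{\pd M}(p)\,d\vol_{S_pM}(v)=d\mu_\pd .
$$
Hence $\rho=\mu_\pd$, $F^*\mu_L=\mu_\pd\otimes dt$, and integrating over $\Omega_A$ yields $\mu_L(\Phi(A))=\int_A\ell(v)\,d\mu_\pd(v)$, which is the asserted formula.

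The point requiring care is regularity and measurability: $\ell$ is in general only measurable — it may jump at vectors whose geodesics exit $\pd M$ tangentially — so $F$ is a diffeomorphism only on the open, full-measure subset of $\Omega_A$ where the exit is transverse and $\ell<\infty$, and one must know the complement is $\mu_\pd$-null. Nullity of the set of tangentially-exiting vectors is a transversality statement; nullity of $\{\ell=+\infty\}$ follows a posteriori from $\mu_L(SM)=\omega_{n-1}\vol(M)<\infty$ once the measure-preserving property is established on the rest. On the good open set the change-of-variables formula applies directly, and since all integrands are nonnegative the null exceptional set contributes nothing to either side. This bookkeeping is where I expect to have to be careful, though it is standard and reproduces the classical Santal\'o argument (cf.\ \cite{Santalo} and the other references cited above).
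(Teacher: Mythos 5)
The paper does not prove Proposition \ref{p-santalo} itself but refers to the classical sources (Santal\'o, Michel, Gromov), and your argument is precisely the standard proof found there: flow out the boundary measure $\cos\angle(v,\nu(p))\,d\vol_{S_pM}\,d\vol_{\pd M}$ by the geodesic flow, combine Liouville invariance with the flux computation $i_X\mu_L|_{SM|_{\pd M}}=\cos\angle(v,\nu(p))\,d\vol_{\pd M}\,d\vol_{S_pM}$, and discard the tangential and trapped vectors as null sets. Your handling of the exceptional sets is correct (the image of the bad set under the locally Lipschitz map $F$ between manifolds of equal dimension $2n-1$ is $\mu_L$-null, and finiteness of $\mu_L(SM)$ forces $\{\ell=+\infty\}$ to be null for the boundary measure), so the proposal matches the intended proof and I have nothing to add.
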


\begin{remark}
If a geodesic $\gamma_v$ (where $v\in S^+_pM$) is a unique minimizing
geodesic between boundary points $p$ and $q$, then the
angle $\angle (v,\nu(p))$ is uniquely determined by
the derivative at $p$ of the function $bd_M(\cdot,q)$,
cf.\ Lemma \ref{l-dbd}.
Thus Proposition \ref{p-santalo} implies that the total volume
of a simple Riemannian manifold $M$
is uniquely determined by $bd_M$.
\end{remark}

\subsection{Local isoembolic inequality}
\label{sec-isoembolic}
M.~Berger \cite{Be80} proved that the volume of a closed
Riemannian manifold $M^n$ is bounded below by
the $n$th power of the injectivity radius
times a constant depending on~$n$
(the equality is attained when $M$ is a round $n$-sphere).
This fact is often referred to as the \textit{isoembolic inequality}.
We need the following ``local'' version of
this inequality, proved by C.~Croke.

\begin{prop}[{\cite[Proposition 14]{Croke80}}]
\label{p-isoembolic}
Let $M^n$ be a complete Riemannian manifold, possibly with boundary.
Let $x\in M$ and $r>0$ be such that $B_r(x)\cap\pd M=\emptyset$
and every geodesic segment contained in $B_r(x)$ is minimizing
(i.e.\ is a shortest path between its endpoints).
Then
\begin{equation}
\label{e-isoembolic}
 \vol(B_r(x)) \ge c r^n
\end{equation}
for some explicit constant $c=c(n)>0$.
\end{prop}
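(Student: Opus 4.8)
The plan is to reduce \eqref{e-isoembolic} to a curvature-free lower bound for the volume of a geodesic ball and to extract that bound from Santal\'o's formula (Proposition~\ref{p-santalo}) together with the isoperimetric inequality on the round sphere. The first step is to observe that the hypothesis forces the exponential map $\exp_x\colon B_r(0)\subset T_xM\to B_r(x)$ to be a diffeomorphism. It is nonsingular on $B_r(0)$: if it were singular at $v$ with $|v|<r$, the radial geodesic $\gamma_{v/|v|}$ would be conjugate to $x$ at parameter $|v|$, so its extension to parameter $|v|+\ep$ (for small $\ep$) would be a non-minimizing geodesic segment contained in $\bar B_{|v|+\ep}(x)\subset B_r(x)$, contradicting the hypothesis. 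It is injective: if $\exp_x(v)=\exp_x(w)=y$ with $v\ne w$, then, the two radial geodesics being minimizing, $|v|=|w|=d_M(x,y)$, and extending one of them slightly past $y$ and comparing with the broken path through the other one produces a curve shorter than its length, again a contradiction. Consequently, for each $\rho<r$ the geodesic sphere $\pd B_\rho(x)$ is a smooth embedded hypersurface, and geodesic polar coordinates at $x$ give $\vol(B_\rho(x))=\int_{S^{n-1}}\int_0^\rho a_x(t,v)\,dt\,dv$ and $\vol(\pd B_\rho(x))=\int_{S^{n-1}}a_x(\rho,v)\,dv=\tfrac{d}{d\rho}\vol(B_\rho(x))$, where $a_x>0$ is the geodesic area density.

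The heart of the argument is the isoperimetric inequality $\vol(B_\rho(x))^{\,n-1}\le C(n)\,\vol(\pd B_\rho(x))^{\,n}$ for $\rho<r$. To prove it I would view $\Omega=B_\rho(x)$ as a compact manifold with smooth boundary; since $\Omega\subset B_r(x)$, every geodesic of $\Omega$ is a minimizing segment of $M$. Santal\'o's formula applied to $\Omega$ writes $\omega_{n-1}\vol(\Omega)$ as the integral of the chord length over the space of geodesics of $\Omega$. Because all of these geodesics are minimizing, the geodesic flow cannot refocus directions, so the scattering correspondence between inward unit vectors over $\pd\Omega$ and interior points is under control; feeding the associated Jacobian identities into the isoperimetric inequality on the unit tangent spheres $S_{x'}M\cong S^{n-1}$ — which forbids the chords through an interior point from degenerating onto a lower-dimensional set — yields the asserted comparison of $\vol(\Omega)$ with $\vol(\pd\Omega)$. (This is essentially Croke's isoperimetric inequality, here needed only for the nested balls $B_\rho(x)$.)

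To finish, set $V(\rho)=\vol(B_\rho(x))$, a smooth function on $(0,r)$ with $V'(\rho)=\vol(\pd B_\rho(x))$ and $V(\rho)\to0$ as $\rho\to0$. The inequality above reads $V'\ge C(n)^{-1/n}V^{(n-1)/n}$, i.e.\ $\tfrac{d}{d\rho}\bigl(V(\rho)^{1/n}\bigr)\ge\tfrac1n C(n)^{-1/n}$; integrating and letting $\rho\to r$ gives $\vol(B_r(x))\ge n^{-n}C(n)^{-1}r^n$, which is \eqref{e-isoembolic} with $c=c(n)=n^{-n}C(n)^{-1}$. The main obstacle is the isoperimetric inequality of the second paragraph: there is no curvature hypothesis, hence no pointwise Jacobi-field comparison is available, and the volume bound must be squeezed entirely out of the global Santal\'o identity; the delicate points are the possible non-convexity of the balls $B_\rho(x)$ and the grazing geodesics of very small length, which contribute little to the chord integral but must nevertheless be accounted for when combining Santal\'o's formula with the spherical isoperimetric inequality.
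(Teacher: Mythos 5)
Your overall architecture coincides with the one the paper indicates in the remark following the proposition (and with Croke's own route): slice the ball by the co-area identity, apply a curvature-free isoperimetric inequality to each sub-ball $B_\rho(x)$, and integrate the resulting differential inequality $\tfrac{d}{d\rho}\bigl(V(\rho)^{1/n}\bigr)\ge \mathrm{const}(n)$. Your first paragraph (that $\exp_x$ is a nonsingular injective map on $B_r(0)\subset T_xM$, so the spheres $\pd B_\rho(x)$ are smooth and $V'(\rho)=\vol_{n-1}(\pd B_\rho(x))$) is correct — note that surjectivity onto the metric ball also needs the observation that a minimizer from $x$ to a point of $B_\rho(x)$ stays in $B_r(x)$, hence avoids $\pd M$ and is a genuine geodesic — and your third paragraph (the ODE integration) is fine.

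The genuine gap is the second paragraph. The inequality $\vol(B_\rho(x))^{(n-1)/n}\le C(n)\,\vol(\pd B_\rho(x))$ for domains in which every geodesic is minimizing is exactly Croke's Theorem 11, and it carries essentially all of the content of the proposition; the paper does not prove it either — it cites \cite{Croke80} and merely remarks that the argument never uses absence of a boundary of $M$, only that the ball does not reach it. Your text at this point ("the scattering correspondence between inward unit vectors over $\pd\Omega$ and interior points is under control; feeding the associated Jacobian identities into the isoperimetric inequality on the unit tangent spheres \dots yields the asserted comparison") is a placeholder, not an argument: it does not say what the Jacobian identities are, what is being compared, or where the dimensional exponent $(n-1)/n$ comes from. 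Croke's actual proof is a nontrivial integral-geometric computation in which Santal\'o's formula for the chord flow is combined with H\"older-type estimates and a Berger--Kazdan-type inequality along chords free of conjugate points (this is also why the extremal case is the round hemisphere); the spherical isoperimetric inequality on $S_{x'}M$ that you invoke is not the relevant ingredient. So, judged as a self-contained proof, the proposal reduces the proposition correctly but leaves its core unproved; judged as a reduction to the literature, it matches the paper, which simply quotes Croke's result at exactly this point.
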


\begin{remark}
In \cite{Croke80}, the result is stated only for boundaryless manifolds,
but the proof uses only the fact that the ball
in question does not reach the boundary.
Indeed, \eqref{e-isoembolic} follows immediately from
the identity $\vol(B_r(x))=\int_0^r \vol_{n-1}(\pd B_t(x))\,dt$
(which holds for any complete Riemannian manifold $M$, $x\in M$ and $r>0$ such that
$B_r(x)\cap\pd M=\emptyset$) and an isoperimetric inequality
$$
\frac{\vol_{n-1}(\pd B_t(x))}{\vol(B_t(x))^{(n-1)/n}}\ge \text{const}(n)
$$
(Theorem 11 of \cite{Croke80}) which holds for any region
(in place of $B_t(x)$) where all geodesics are minimizing.
\end{remark}

\section{Distance-like coordinates}
\label{sec-coord}

This section is the first part of the proof of Theorem \ref{t-C0}.
Here we construct a Lipschitz map $\phi\colon M\to\R^n$
(our would-be Gromov--Hausdorff approximation)
and establish some of its technical properties
(summarized in Proposition \ref{p-phi}).

Let $M$ satisfy the assumptions of Theorem \ref{t-C0} for a small $\delta$.
We fix $D$ and $\lambda$ 
and omit dependence on them in our notations.
We denote by $\ep(\delta)$ various quantities depending
on $\delta$ and tending to~0  as $\delta\to 0$.
In this notation, the assertion of Theorem \ref{t-C0}
is that $d_{GH}(M,D)<\ep(\delta)$.
The notation $A\approx B$ is an abbreviation
for $|A-B|<\ep(\delta)$.
Denote $M'=M\setminus U_\delta(\pd M)$.

To avoid confusion in notation caused by identifying $\pd M$ with $\pd D$,
we replace the first assumption of Theorem \ref{t-C0} by the following:
there is a continuous map $F:\pd M\to\pd D$
of nonzero degree mod 2 such that
\be\label{e-F}
 |d_M(x,y) - d_{\R^n}(F(x),F(y))| < \delta
\qquad\text{for all $x,y\in\pd M$}.
\ee
We fix such a map $F$ for the rest of this section.


For a unit vector $v\in S^{n-1}\subset\R^n$,
define a linear function $L_v:\R^n\to\R$ by
$$
L_v(x)=\langle x,v\rangle
$$
where $\langle \cdot,\cdot\rangle$ is the scalar product in $\R^n$,
and a function $\phi_v:M\to\R$ by
\begin{equation}
\label{phidef}
 \phi_v(x) = \inf_{y\in\pd M} \{d_M(x,y) + L_v(F(y))\} 
\end{equation}
where $F$ is the map from \eqref{e-F}.
Note that this function is 1-Lipschitz on $M$ since it is
a pointwise infimum of 1-Lipschitz functions.
Define a map $\phi:M\to\R^n$ by
$$
 \phi(x) = (\phi_{e_1}(x),\dots,\phi_{e_n}(x))
$$
where $(e_1,\dots,e_n)$ is the standard basis of $\R^n$.
Obviously $\phi$ is $n$-Lipschitz.
Since the coordinate functions of $\phi$ are 1-Lipschitz,
its Jacobian at any point of differentiability
is no greater than~1.
Therefore $\phi$ is volume non-increasing.

Our ultimate goal is to show that $\phi$ is an $\ep(\delta)$-approximation
of a small neighborhood of $D$ in $\R^n$.
In this section we prove the following proposition.

\begin{prop}\label{p-phi}
For every unit vector $v\in\R^n$ the following holds.
\begin{enumerate}
 \item[\refstepcounter{equation}\label{e-phi-maxlip}\eqref{e-phi-maxlip}]
For every $x\in M$ there is a point $y\in\pd M$ such that
$\phi_v(x) = \phi_v(y) + d_M(x,y)$.
 \item[\refstepcounter{equation}\label{e-phibd}\eqref{e-phibd}]
$|\phi(x)-\phi(y)|\approx d_M(x,y)$ for all $x,y\in\pd M$.
 \item[\refstepcounter{equation}\label{e-phi-vol}\eqref{e-phi-vol}]
$\vol(\phi(E))\ge\vol(E)-\ep(\delta)$
for every measurable $E\subset M'$.
 \item[\refstepcounter{equation}\label{e-phi-image}\eqref{e-phi-image}]
$\phi(M)$ and $\phi(\pd M)$ are within Hausdorff distance $\ep(\delta)$
from $D$ and $\pd D$, resp.
 \item[\refstepcounter{equation}\label{e-phiv}\eqref{e-phiv}]
$\phi_v(x)\approx L_v(\phi(x))$ for all $x\in M$.
\end{enumerate}
\end{prop}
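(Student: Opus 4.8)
\emph{Sketch of the argument.} The five assertions are proved essentially in the order \eqref{e-phi-maxlip}, \eqref{e-phibd} together with the $\pd D$--part of \eqref{e-phi-image}, \eqref{e-phi-vol}, \eqref{e-phiv}, and finally the $D$--part of \eqref{e-phi-image}; assertion \eqref{e-phiv} is the main obstacle. First, \eqref{e-phi-maxlip}: the triangle inequality for $d_M$ gives the ``semigroup'' identity $\phi_v(x)=\inf_{y\in\pd M}\{d_M(x,y)+\phi_v(y)\}$ for all $x\in M$ (``$\le$'' by taking the outer and inner infima at the same point; ``$\ge$'' since $\phi_v$ is $1$-Lipschitz), and since $\pd M$ is compact and $\phi_v$ continuous this infimum is attained. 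Next, for each unit $v$ and each $y\in\pd M$ one has $\phi_v(y)\le L_v(F(y))$ (plug $y$ into \eqref{phidef}), while for any $z\in\pd M$, \eqref{e-F} and Cauchy--Schwarz give $d_M(y,z)+L_v(F(z))\ge |F(y)-F(z)|-\delta+L_v(F(z))\ge L_v(F(y))-\delta$; hence $|\phi(y)-F(y)|\le\ep(\delta)$ on $\pd M$. As $F$ has nonzero degree mod $2$ it is onto $\pd D$, which gives the $\pd D$--part of \eqref{e-phi-image}; and combining $\phi\approx F$ on $\pd M$ with \eqref{e-F} yields $|\phi(x)-\phi(y)|\approx|F(x)-F(y)|=d_{\R^n}(F(x),F(y))\approx d_M(x,y)$ for $x,y\in\pd M$, which is \eqref{e-phibd}.

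A degree-mod-$2$ argument then shows that every $p$ in the interior of $D$ with $\dist(p,\pd D)>\ep(\delta)$ lies in $\phi(M)$: otherwise $x\mapsto(\phi(x)-p)/|\phi(x)-p|$ is a continuous map $M\to S^{n-1}$, so its restriction to $\pd M$ has zero degree mod $2$; but that restriction is homotopic (the segments $[\phi(y),F(y)]$ miss $p$, since $|\phi(y)-F(y)|\le\ep(\delta)<\dist(p,\pd D)$) to the composite of $F$ with the radial projection $\pd D\to S^{n-1}$ from $p$, a homeomorphism, hence has nonzero degree mod $2$ --- a contradiction. Such a $p$ is not the image of a point of the collar $U_\delta(\pd M)$ (its image lies within $\ep(\delta)$ of $\pd D$), so $\{p\in D:\dist(p,\pd D)>\ep(\delta)\}\subset\phi(M')$; this gives $\vol(\phi(M'))\ge\vol(D)-\ep(\delta)$ (as $D$ is fixed and convex) and, with the $\pd D$--part of \eqref{e-phi-image}, also $D\subset U_{\ep(\delta)}(\phi(M))$. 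Now \eqref{e-phi-vol} follows: for measurable $E\subset M'$, subadditivity and $J\phi\le1$ give $\vol(\phi(M'))\le\vol(\phi(E))+\vol(M'\setminus E)$, hence $\vol(\phi(E))\ge\vol(\phi(M'))-\vol(M')+\vol(E)\ge\vol(E)-\ep(\delta)$ by assumption (2) of Theorem \ref{t-C0}.

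The heart of the matter is \eqref{e-phiv}. Applying \eqref{e-phi-vol} and the area inequality $\vol(\phi(E))\le\int_E J\phi$ to $E=\{x\in M':J\phi(x)<1-\tau\}$ bounds $\vol(E)$ by $\ep(\delta)/\tau$; since $J\phi\le\prod_i|\nabla\phi_{e_i}|\le1$ by Hadamard's inequality and the fact that each $\phi_{e_i}$ is $1$-Lipschitz, choosing $\tau=\ep(\delta)^{1/2}$ shows that off a set of measure $\ep(\delta)$ in $M'$ the gradients $\nabla\phi_{e_1},\dots,\nabla\phi_{e_n}$ form an almost orthonormal frame, so $\|d_x\phi\|\le 1+\ep(\delta)$ there. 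In other words $\phi$ is, away from a negligible set, almost distance non-increasing on $M'$ \emph{as a map into $\R^n$}, not just coordinate-wise. Together with the elementary convexity identity $\inf_{q\in\pd D}\{|p-q|+L_v(q)\}=L_v(p)$ for $p\in D$ (``$\le$'' by taking $q$ where the ray $p-\R_{\ge0}v$ meets $\pd D$; ``$\ge$'' since $|p-q|\ge L_v(p-q)$), this is meant to yield \eqref{e-phiv} by writing $\phi_v(x)=\inf_{y\in\pd M}\{d_M(x,y)+\phi_v(y)\}$, replacing $\phi_v(y)$ by $L_v(F(y))$ up to $\delta$ on $\pd M$, and comparing $d_M(x,y)$ with $|\phi(x)-\phi(y)|$ along a minimizing geodesic furnished by \eqref{e-phi-maxlip} --- which, for generic data, avoids the exceptional set and spends almost all of its length in $M'$. (Points $x$ with $\phi(x)$ far from $D$ have $\dist_M(x,\pd M)$ a priori bounded: by assumption (3) a ball about such $x$ contained in $M'$ has volume $\gtrsim(\,\cdot\,)^n$, capped by $\vol(M')\le\vol(D)+\delta$; so they form a controlled region.) \textbf{Making the ``almost $1$-Lipschitz on $M'$'' estimate interact cleanly with the collar $M\setminus M'$, the negligible exceptional set, and the far region is the main difficulty of the proof.}

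Finally, granting \eqref{e-phiv}, the $D$--part of \eqref{e-phi-image} follows by a ball argument. Suppose $\dist(\phi(x),D)=t$ for some $x\in M$, with $t>\ep(\delta)$, and take a unit $v$ with $L_v\le h$ on $D$ and $L_v(\phi(x))=h+t$. By \eqref{e-phiv} and the $1$-Lipschitzness of $\phi_v$, for every $y\in\pd M$ we get $d_M(x,y)\ge\phi_v(x)-\phi_v(y)\ge(h+t-\ep(\delta))-h=t-\ep(\delta)$, so $B_r(x)\subset M'$ for $r=(t-\ep(\delta))/2$; and again by \eqref{e-phiv}, $L_v(\phi(z))\ge\phi_v(z)-\ep(\delta)\ge h+t/2-\ep(\delta)$ for $z\in B_r(x)$, whence $\phi(B_r(x))\cap D=\emptyset$. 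Then \eqref{e-phi-vol} and assumption (3) give $\vol(\phi(M')\setminus D)\ge\vol(\phi(B_r(x)))\ge\lambda r^n-\ep(\delta)$, while $\vol(\phi(M')\setminus D)=\vol(\phi(M'))-\vol(\phi(M')\cap D)\le(\vol(D)+\delta)-(\vol(D)-\ep(\delta))=\ep(\delta)$; hence $\lambda r^n\le\ep(\delta)$, so $t=\ep(\delta)$. Together with $D\subset U_{\ep(\delta)}(\phi(M))$ this yields \eqref{e-phi-image}.
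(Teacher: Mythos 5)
Your treatment of \eqref{e-phi-maxlip}, \eqref{e-phibd}, \eqref{e-phi-vol} and of \eqref{e-phi-image} is essentially the paper's own argument (the semigroup identity, $\phi\approx F$ on $\pd M$, the mod-2 degree argument giving $\phi(M')\supset\{p\in D:\dist(p,\pd D)>\ep(\delta)\}$, and the ball-volume argument for the $D$-part), and those parts are fine. The problem is \eqref{e-phiv}, which is exactly the assertion the whole construction is designed to deliver, and which you yourself flag as ``the main difficulty'': what you offer for it does not close. From \eqref{e-phi-vol} and the area inequality you can indeed conclude that $J\phi\ge 1-\tau$ off a subset of $M'$ of measure $\le\ep(\delta)/\tau$, and near-equality in Hadamard's inequality then makes $d_x\phi$ almost an isometry \emph{at almost every point of a large-measure set}. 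But that is a measure-theoretic statement, and \eqref{e-phiv} is a pointwise statement about every $x\in M$ that you propose to reach by comparing $d_M(x,y)$ with $|\phi(x)-\phi(y)|$ along a minimizing geodesic. A geodesic has measure zero, so nothing prevents it from lying entirely inside the exceptional set; it may also run through the collar $M\setminus M'$, where \eqref{e-phi-vol} gives no Jacobian control at all; and ``for generic data'' is not an argument, since the pair $(x,y)$ is dictated by \eqref{e-phi-maxlip}, not chosen generically. Worse, even granting a global almost-$1$-Lipschitz bound $|\phi(x)-\phi(y)|\le d_M(x,y)+\ep(\delta)$, that only helps with the lower bound $\phi_v(x)\ge L_v(\phi(x))-\ep(\delta)$; the upper bound requires $d_M(x,y)\le|\phi(x)-\phi(y)|+\ep(\delta)$ for a suitable boundary point $y$, i.e.\ the reverse comparison, which in the paper is Lemma \ref{l-phi-almost-isometry} and is proved only in Section 5 \emph{using} \eqref{e-phiv} — so following that route would be circular. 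Note also that your Hadamard argument only constrains the mutual geometry of $\nabla\phi_{e_1},\dots,\nabla\phi_{e_n}$; it says nothing about how $\nabla\phi_v$ for a non-coordinate direction $v$ is positioned relative to them, which is the actual content of \eqref{e-phiv}.

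The paper closes this gap by a different mechanism: it packages $\phi_{e_1},\dots,\phi_{e_n}$ \emph{together with} $\phi_{v_1},\dots,\phi_{v_n}$ (for an orthonormal frame with $v_1=v$) into the map $\Phi=\tfrac1{\sqrt2}(\phi_{e_1},\dots,\phi_{e_n},\phi_{v_1},\dots,\phi_{v_n}):M\to\R^{2n}$, whose $2n$ coordinates are $(1/\sqrt2)$-Lipschitz, so a trace/AM--GM estimate shows $\Phi$ is volume non-increasing (Lemma \ref{l-Phi-vol}); a shrinking-projection argument (Lemma \ref{l-aux1}) then bounds the volume of $\Phi^{-1}(\R^{2n}\setminus U_r(I(\R^n)))$, and the ball-volume assumption (3) upgrades this to the pointwise inclusion $\Phi(M)\subset U_{\ep(\delta)}(I(\R^n))$ (Lemma \ref{Phi-nbhd}), from which \eqref{e-phiv} follows by projecting back to $\R^n$. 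That is the step your sketch is missing; without it, or some substitute of comparable strength, the proof of Proposition \ref{p-phi} is incomplete.
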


The proof of Proposition \ref{p-phi} occupies the rest of this section.
Most of the assertions are nearly trivial; only \eqref{e-phiv}
requires some work.

\medskip\noindent\textbf{Proof of (\ref{e-phi-maxlip}).}
Fix $x\in M$ and let $y\in\pd M$ be a point where
the infimum in \eqref{phidef} is attained.
Then $\phi_v(x)=d_M(x,y)+L_v(F(y))$.
Since $\phi_v$ is 1-Lipschitz, it follows that
$\phi_v(y)\ge L_v(F(y))$.
On the other hand,
$$
 \phi_v(y) \le d_M(y,y)+L_v(F(y))=L_v(F(y))
$$
by the definition of $\phi_v$.
Thus $\phi_v(y)=L_v(F(y))$ and \eqref{e-phi-maxlip} follows.
\qed

\medskip\noindent\textbf{Proof of (\ref{e-phibd}).}
For every $x\in\pd M$ and every unit vector $v\in\R^n$ we have
\begin{equation}
\label{phi1}
  |\phi_v(x)- L_v(F(x))| \le \delta
\end{equation}
where $F$ is the map from \eqref{e-F}. Indeed, for every $y\in\pd M$,
$$
 d_M(x,y) + L_v(F(y)) \ge d_{\R^n}(F(x),F(y))+L_v(F(y))-\delta \ge L_v(F(x))-\delta
$$
since $L_v$ is 1-Lipschitz. Hence $\phi_v(x)\ge L_v(F(x))-\delta$.
On the other hand, substituting $y=x$ under the infimum in \eqref{phidef}
yields that $\phi_v(x)\le L_v(F(x))$, and \eqref{phi1} follows.
Since $(L _{e_1},\dots,L_{e_n})=\id_{\R^n}$,
\eqref{phi1} implies
\be\label{phi-dm}
 |\phi(x)-F(x)| \le n\delta\qquad\text{for all $x\in\pd M$}.
\ee
This and \eqref{e-F} imply that
$$
 |\phi(x)-\phi(y)|\approx |F(x)-F(y)| \approx d_M(x,y)
$$
for all $x,y\in\pd M$.
\qed

\medskip\noindent\textbf{Proof of (\ref{e-phi-vol}).}
By \eqref{phi-dm},
$\phi(\pd M)\subset U_{n\delta}(F(\pd M))=U_{n\delta}(\pd D)$ and
moreover $\phi|_{\pd M}$ is homotopic to $F$ in $U_{n\delta}(\pd D)$.
Therefore $\phi$ has degree 1 over any point of
$D\setminus U_{n\delta}(\pd D)$, hence
$\phi(M)\supset D\setminus U_{n\delta}(\pd D)$.
Furthermore,
\be\label{phi-near-dm}
 \phi(M\setminus M')= \phi(U_\delta(\pd M))\subset U_{2n\delta}(\pd D),
\ee
since $\phi$ is $n$-Lipschitz. Hence
\be\label{nbhd-neg}
 \phi(M')\supset D\setminus U_{2n\delta}(\pd D).
\ee
Since $\phi$ is volume non-increasing, we have
$$
 \vol(M') \ge \vol(\phi(M'))
 \ge \vol(D\setminus U_{2n\delta}(\pd D))>\vol(D)-\ep(\delta)
 > \vol(M')-\ep(\delta)
$$
by \eqref{nbhd-neg} and the second assumption of Theorem \ref{t-C0}.
Let $E\subset M'$ be a measurable set. Then
$$
 \vol(\phi(E))+\vol(\phi(M'\setminus E)) \ge \vol(\phi(M'))>\vol(M') - \ep(\delta) .
$$
On the other hand, $\vol(\phi(M'\setminus E)) \le \vol(M'\setminus E)$
since $\phi$ is volume non-increasing.
Hence $\vol(\phi(E))> \vol(M')-\vol(M'\setminus E)-\ep(\delta)=\vol(E)-\ep(\delta)$.
\qed

\medskip\noindent\textbf{Proof of (\ref{e-phi-image}).}
The assertion about $\phi(\pd M)$ follows from \eqref{phi-dm}
and the fact that $F(\pd M)=\pd D$.
By \eqref{nbhd-neg},
$D$ is contained in a small neighborhood of $\phi(M)$.
It remains to show that $\phi(M)$ is contained in a small neighborhood of~$D$.

Let $p\in M$ and $r=\dist_{\R^n}(\phi(p),D)$.
We are to prove that $r<\ep(\delta)$.
Suppose that $r>4n\delta$ and consider a metric ball
$B=B_{r/2n}(p)$.
Since $\phi$ is $n$-Lipschitz, we have
$$
 \phi(B) \subset B_{r/2}(\phi(p))
 \subset \R^n\setminus U_{2n\delta}(D),
$$
hence $B\subset M'$ by \eqref{phi-near-dm}.
Hence by \eqref{e-phi-vol} and the third assumption of Theorem \ref{t-C0}
we have
$$
 \vol(\phi(B)) > \vol(B)-\ep(\delta) \ge \lambda(r/2n)^n - \ep(\delta) .
$$
This and \eqref{nbhd-neg} imply that
$$
 \vol(M') \ge \vol(\phi(M'))\ge \vol(D\setminus U_{2n\delta}(\pd U))+\vol(\phi(B))
 > \vol(D) +  \lambda(r/2n)^n - \ep(\delta) .
$$
On the other hand, $\vol(M')<\vol(D)+\ep(\delta)$ by the second requirement
of Theorem \ref{t-C0}.
Therefore $\lambda(r/2n)^n < \ep(\delta)$,
hence $r<\ep(\delta)$.
\qed

\medskip\noindent\textbf{Proof of (\ref{e-phiv}).}
We need one more construction and some lemmas.

Fix a unit vector $v\in\R^n$ and
an orthonormal basis $(v_1,v_2,\dots,v_n)$ in $\R^n$ such that $v_1=v$.
Define a linear map $I:\R^n\to\R^{2n}$ by
$$
 I = \tfrac1{\sqrt2}(L_{e_1},\dots,L_{e_n},L_{v_1},\dots,L_{v_n})
$$
and a Lipschitz map $\Phi:M\to\R^{2n}$ by
$$
 \Phi = \tfrac1{\sqrt2}(\phi_{e_1},\dots,\phi_{e_n},\phi_{v_1},\dots,\phi_{v_n}) .
$$
Observe that $I$ is a linear isometric embedding,
$\Phi$ is a $2n$-Lipschitz map
and
\be\label{Phi-dm}
|\Phi(x)-I(F(x))|\le 2n\delta
\ee
for all $x\in\pd M$ (by \eqref{phi1} and \eqref{phi-dm}).
Therefore
\be\label{Phi-near-dm}
 \Phi(M\setminus M') = \Phi(U_\delta(\pd M)) \subset U_{4n\delta}(I(\pd M)).
\ee

\begin{lemma}\label{l-Phi-vol}
$\Phi$ does not increase $n$-dimensional volumes.
\end{lemma}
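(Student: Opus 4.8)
The plan is to prove that the pointwise $n$-dimensional Jacobian $J\Phi$ is at most $1$ almost everywhere, after which the area inequality of Section~\ref{sec-prelim} immediately gives $\vol_n(\Phi(A))\le\vol_n(A)$ for every measurable $A\subseteq M$. Since $\Phi$ is $2n$-Lipschitz, it is differentiable at almost every point of $M$ by Rademacher's theorem, so it is enough to bound $J\Phi(x)$ at a point $x$ where all $2n$ coordinate functions $\phi_{e_1},\dots,\phi_{e_n},\phi_{v_1},\dots,\phi_{v_n}$ are differentiable.

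Fix such an $x$ together with an orthonormal basis of $T_xM$, and let $A$ be the $2n\times n$ matrix representing $d_x\Phi$; its rows are $\tfrac1{\sqrt2}\,\nabla\phi_w(x)^{T}$ as $w$ runs over the $2n$ unit vectors above. By the definition of the Jacobian as the $n$-volume of the image of the unit cube, $J\Phi(x)=\sqrt{\det(A^{T}A)}$, where $A^{T}A$ is symmetric positive semidefinite of size $n\times n$. The only input I need about the coordinate functions is that each $\phi_w$ is $1$-Lipschitz, hence $|\nabla\phi_w(x)|\le1$; consequently
$$
\trace(A^{T}A)=\tfrac12\sum_{w}|\nabla\phi_w(x)|^{2}\le\tfrac12\cdot 2n=n .
$$

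To conclude, I apply the arithmetic--geometric mean inequality to the (nonnegative) eigenvalues $\mu_1,\dots,\mu_n$ of $A^{T}A$:
$$
\det(A^{T}A)=\prod_{i}\mu_i\le\Bigl(\tfrac1n\sum_i\mu_i\Bigr)^{n}=\Bigl(\tfrac1n\trace(A^{T}A)\Bigr)^{n}\le1,
$$
so $J\Phi(x)\le1$, and the area inequality finishes the proof. There is no serious obstacle here; the one thing to get right is that for a map into a space of larger dimension the relevant quantity is $\sqrt{\det(d\Phi^{*}d\Phi)}$ rather than an honest determinant, and that the $1$-Lipschitz bound on the coordinates controls the trace of $d\Phi^{*}d\Phi$ but not its individual eigenvalues, which is precisely why AM--GM (and not Hadamard's inequality, as in the argument that $\phi$ itself is volume non-increasing) is the right tool. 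Note that, unlike elsewhere in the proof of Proposition~\ref{p-phi}, property~\eqref{e-phi-maxlip} plays no role in this lemma.
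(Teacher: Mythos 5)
Your proof is correct and is essentially the paper's own argument: both bound the trace of the pulled-back quadratic form $d\Phi^{*}d\Phi$ by $n$ using the $(1/\sqrt2)$-Lipschitz bound on each of the $2n$ coordinate functions, and then apply the arithmetic--geometric mean inequality to its eigenvalues to get $J\Phi\le 1$, concluding via the area inequality. No differences worth noting.
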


\begin{proof}
The statement follows from the fact that the $2n$ coordinate
functions of $\Phi$ are $(1/\sqrt2)$-Lipschitz.

Indeed, let $p$ be a point
of differentiability of $\Phi$ and let $Q$ be the pull-back of the Euclidean
structure of $\R^{2n}$ by $d_p\Phi$. That is, $Q$ is a quadratic form on $T_pM$
defined by $Q(w)=|d_p\Phi(w)|^2$ for all $w\in T_pM$. Then
$$
 \trace Q = \sum_{i=1}^{2n} \trace (d_p\Phi_i)^2 = \sum_{i=1}^{2n} \|d_p\Phi_i\|^2
 \le \sum_{i=1}^{2n} \frac12=n,
$$
since $\|d_p\Phi_i\|\le 1/\sqrt2$ for all $i=1,\dots,2n$.
Hence
$$
 \det Q \le \left(\tfrac1n\trace Q\right)^n \le 1.
$$
Here $\Phi_i$, $i=1,\dots,n$, are the coordinate functions of $\Phi$
(that is, $\Phi_i=\phi_{e_i}$ or $\Phi_i=\phi_{v_{i-n}}$)
and all traces and determinants are with respect to the Euclidean
structure on $T_pM$ defined by the Riemannian metric. 
The last inequality means that the $n$-dimensional Jacobian of $\Phi$ at $p$
is no greater than~1, hence $\Phi$ does not increase $n$-dimensional volumes.
\end{proof}

Our next goal is to show that $\Phi(M)$ is contained
in a small neighborhood of the subspace $I(\R^n)$ in $\R^{2n}$
(cf.\ Lemma \ref{Phi-nbhd}).
The following lemma is an intermediate step towards this.

\begin{lemma}\label{l-aux1}
For every fixed $r>0$, one has
$
 \vol (\Phi^{-1}(\R^{2n}\setminus U_r(I(\R^n)))) < \ep(\delta) 
$.
\end{lemma}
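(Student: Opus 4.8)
\medskip\noindent
The plan is the following. Write $A:=\Phi^{-1}(\R^{2n}\setminus U_r(I(\R^n)))$ and $\rho(x):=\dist(\Phi(x),I(\R^n))$, so $A=\{\rho\ge r\}$, and let $\pi,\pi^\perp$ be the orthogonal projections of $\R^{2n}$ onto $I(\R^n)$ and onto its orthogonal complement. Since $\Phi$ is $2n$-Lipschitz and $\rho\le2n\delta$ on $\pd M$ by \eqref{Phi-dm}, for $\delta$ small $A$ lies in $M'$ a definite distance away from $\pd M$. Setting $\psi'(x):=\sum_k\phi_{v_k}(x)v_k$, a direct computation gives $I^{-1}\!\circ\pi\circ\Phi=\tfrac12(\phi+\psi')=:h$ and shows that $\pi^\perp\Phi(x)$ corresponds, under the natural isometry $I(\R^n)^\perp\cong\R^n$, to $\tfrac12(\phi(x)-\psi'(x))$; hence $\rho(x)=\tfrac12|\phi(x)-\psi'(x)|\ge\tfrac12|L_v(\phi(x))-\phi_v(x)|$, so the lemma is exactly what is needed to finish \eqref{e-phiv}. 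I will show that $\Phi$ is \emph{almost} volume-preserving on $M'$, deduce that its image $N:=\Phi(M')$ is, off a set of small $\mathcal H^n$-measure, a low-slope graph over $D$, and then invoke the Poincar\'e inequality on the fixed domain $D$.

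First, $h,\phi,\psi'$ are all $n$-Lipschitz, volume non-increasing (for $h$ because $Jh\le J\Phi\le1$ by Lemma~\ref{l-Phi-vol}), within $\ep(\delta)$ of $F$ on $\pd M$ (for $\psi'$ and $h$ by \eqref{phi1}, \eqref{phi-dm} and expansion in $(v_k)$), and of degree $1$, so the argument of the proof of \eqref{e-phi-image} gives that $h(M'),\phi(M'),\psi'(M')\subset U_{\ep(\delta)}(D)$ and each contains $D\setminus U_{\ep(\delta)}(\pd D)$. Hence $\vol_n(N)\ge\vol_n(\pi N)=\vol(h(M'))\ge\vol(D)-\ep(\delta)$ while $\vol_n(N)\le\int_{M'}J\Phi\le\vol(M')\le\vol(D)+\delta$; so $\int_{M'}(1-J\Phi)<\ep(\delta)$, and likewise $\int_{M'}(1-Jh),\int_{M'}(1-J\phi),\int_{M'}(1-J\psi')<\ep(\delta)$. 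By the area formula these also force $\Phi,h,\phi,\psi'$ to be essentially injective on $M'$ (a.e.\ image point has a single preimage in $M'$).

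Next, at a point of differentiability $x\in M'$ the orthogonal splitting of $\R^{2n}$ gives $(d\Phi)^{*}d\Phi=(d(\pi\Phi))^{*}d(\pi\Phi)+(d(\pi^\perp\Phi))^{*}d(\pi^\perp\Phi)$, so $J\Phi^2\ge Jh^2$; writing $\det(P+Q)/\det P=\prod(1+\mu_i)$ for the eigenvalues $\mu_i\ge0$ of $P^{-1/2}QP^{-1/2}$, and using $\trace\big((d\Phi)^{*}d\Phi\big)\le n$ (as in Lemma~\ref{l-Phi-vol}), one gets $\trace\big((d(\pi^\perp\Phi))^{*}d(\pi^\perp\Phi)\big)\le C(n)(J\Phi^2/Jh^2-1)$ wherever $Jh\ge\tfrac12$. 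Integrating over $M'$, splitting off the small-volume set $\{Jh<\tfrac12\}$, and using the previous paragraph yields $\int_{M'}\|d_x(\pi^\perp\Phi)\|^2\,d\vol<\ep(\delta)$, hence $\int_{M'}\|d_x(\pi^\perp\Phi)\|\,d\vol<\ep(\delta)$ by Cauchy--Schwarz (recall $\vol(M')$ is bounded). By essential injectivity of $h$, for a.e.\ $z\in D$ the fibre $h^{-1}(z)\cap M'$ is one point $x_z$, and $u(z):=\pi^\perp\Phi(x_z)$, $|u(z)|=\rho(x_z)$, is the graphing function of $N$ over $D$; the last bound together with the change of variables $z=h(x)$ (where $Jh\approx1$ makes $dh$, hence $(dh)^{-1}$, almost isometric) shows $u\in BV(D)$ with $\int_D\|Du\|<\ep(\delta)$. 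Finally, essential injectivity of $\phi,\psi'$ and $\phi(M'),\psi'(M')\approx D$ give $\int_{M'}\phi\,d\vol\approx\int_{M'}\psi'\,d\vol\approx\int_D z\,dz$, hence $\int_D u\,dz\approx\tfrac12\int_{M'}(\phi-\psi')\,d\vol\approx0$. Since $D$ is a fixed convex compact domain, Poincar\'e's inequality on it (fixed constant) gives $\int_D|u|<\ep(\delta)$, so $\vol_n\{z\in D:|u(z)|\ge r\}<\ep(\delta)$.

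To conclude, put $Z_r=\{z\in D:|u(z)|\ge r\}$. Off the set where $h|_{M'}$ fails to be injective at $h(x)$ (volume $<\ep(\delta)$ by essential injectivity), $A\subset h^{-1}(Z_r)\cap M'$, and $\vol(h^{-1}(Z_r)\cap M')\le\int_{h^{-1}(Z_r)\cap M'}Jh\,d\vol+\int_{M'}(1-Jh)\,d\vol\le\vol_n(Z_r)+\ep(\delta)$; hence $\vol(A)<\ep(\delta)$. The delicate step is the analytic one in the third paragraph — justifying that the graphing function $u$ really lies in $BV(D)$ with $|Du|(D)$ controlled by $\int_{M'}\|d(\pi^\perp\Phi)\|$. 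This is cleanest in the language of integral currents: $\Phi_{\#}[M']$ is an integral $n$-current in $\R^{2n}$, the estimates above show it is, off a set of small mass, the graph of $u$ with small tilt and small boundary, and the tilt bound then becomes a $BV$ bound on $u$; once that is in hand the fixed-domain Poincar\'e inequality finishes the proof.
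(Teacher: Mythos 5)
Your reductions up to the tilt estimate are essentially sound: the identity $\dist(\Phi(x),I(\R^n))=\tfrac12|\phi(x)-\psi'(x)|$ is correct, $\psi'$ and $h=\tfrac12(\phi+\psi')$ enjoy the same properties as $\phi$ (since $\psi'$ is just $\phi$ built from the rotated basis $(v_k)$, and $Jh\le J\Phi\le 1$ by Lemma \ref{l-Phi-vol}), and the degree/volume bookkeeping giving $\int_{M'}(1-J\Phi),\ \int_{M'}(1-Jh)<\ep(\delta)$, essential injectivity, and $\int_{M'}\|d(\pi^\perp\Phi)\|\,d\vol<\ep(\delta)$ all go through. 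The gap is exactly the step you flag as delicate: passing from this $L^1$ tilt bound on $M'$ to $|Du|(D)<\ep(\delta)$ for the graphing function $u$. The tilt bound controls only the absolutely continuous part of $Du$, and only where $dh$ is quantitatively nondegenerate; on the exceptional set $\{Jh<1-\eta\}$ (small volume, but with an $h$-image that can be a long, thin set) $u$ may jump, and a jump of height $t$ across a hypersurface $\Sigma\subset D$ contributes $t\cdot\mathcal{H}^{n-1}(\Sigma)$ to $|Du|(D)$, which no volume estimate in your list controls. Concretely, the package ``almost volume preserving $+$ small $L^1$ tilt $+$ degree one $+$ essential injectivity'' is consistent with two flat sheets at heights $0$ and $1$ over two halves of $D$, joined through a tube of negligible volume: every inequality you use holds, yet $u$ has total variation of order $1$ and the Poincar\'e step (and hence the conclusion) fails. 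What rules such pictures out is extra input you never invoke at this point — the ball-volume assumption (3) of Theorem \ref{t-C0}, or the special distance-like structure of the $\phi_v$. Your appeal to integral currents does not close this: $M$ need not be orientable (the paper works with degree mod $2$ throughout, so $\Phi_\#[M']$ would have to be a flat chain mod $2$), and the BV/Lipschitz graph approximation theorems for currents with small excess require control of the boundary mass, whereas $\vol_{n-1}(\pd M')$, hence the mass of $\Phi_\#[\pd M']$, is bounded by nothing in the hypotheses. There is also a minor circularity in differentiating the a.e.-defined $u$ before knowing it lies in BV.

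For comparison, the paper's proof avoids all of this with a short contraction trick: choose a $1$-Lipschitz map $P:\R^{2n}\to W=I(\R^n)$ with $P|_{I(D)}=\id$ and $J_nP\le 1-c$ outside $U_r(W)$ (nearest-point projection onto an ellipsoid squeezed between $I(D)$ and $U_{r/2}(W)$, followed by orthogonal projection to $W$). Then $f=I^{-1}\circ P\circ\Phi$ is volume non-increasing, agrees with $F$ up to $\ep(\delta)$ on $\pd M$, so the same degree/volume argument as in \eqref{e-phi-vol} gives $\vol(f(E))>\vol(E)-\ep(\delta)$ for $E=\Phi^{-1}(\R^{2n}\setminus U_r(W))\subset M'$, while the strict contraction gives $\vol(f(E))\le(1-c)\vol(E)$; hence $\vol(E)<\ep(\delta)$. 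If you want to keep your graph-theoretic approach you would have to import precisely this kind of global degree-plus-volume rigidity (or assumption (3)) to exclude jumps; as written, the third paragraph does not prove the lemma.
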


\begin{proof}
Denote $W=I(\R^n)$. There is a 1-Lipschitz map $P:\R^{2n}\to W$
and a constant $c>0$ such that
$P|_{I(D)}=\id_{I(D)}$ and
\be\label{shrink}
 J_n P(x)\le 1-c \qquad\text{for all $x\in\R^{2n}\setminus U_r(W)$,}
\ee
where $J_n$ denotes the $n$-dimensional Jacobian.
Indeed, let $Q\subset\R^{2n}$ be a solid ellipsoid
such that $I(D)\subset Q\subset U_{r/2}(W)$
and let $P_0:\R^{2n}\to Q$ be the nearest-point projection to $Q$.
Then $P_0$ is 1-Lipschitz and satisfies \eqref{shrink} for some $c>0$.
A desired map $P$ can be obtained by composing $P_0$
with the orthogonal projection to~$W$.

Define a map $f:M\to\R^n$ by $f=I^{-1}\circ P\circ\Phi$.
Note that $f$ is volume non-increasing since so are
$\Phi$, $P$ and $I^{-1}$.
Let $E=\Phi^{-1}(\R^{2n}\setminus U_r(W))$, then
\eqref{shrink} implies that
$$
 \vol(f(E)) \le (1-c)\vol(E) .
$$
By \eqref{Phi-near-dm} we have
$\Phi(M\setminus M')\subset U_{4n\delta}(W)$,
hence $E\subset M'$ provided that $\delta<r/4n$.
By \eqref{Phi-dm}, we have $f\approx F$ on $\pd M$.
Similarly to the proof of \eqref{e-phi-vol},
this and the fact that $f$ is volume non-increasing
imply that
$$
 \vol(f(E)) > \vol(E)-\ep(\delta) .
$$
Now the two above inequalities on $\vol(f(E))$
imply that $\vol(E)<\ep(\delta)/c=\ep(\delta)$.
\end{proof}

\begin{lemma}\label{Phi-nbhd}
$\Phi(M)\subset U_{\ep(\delta)}(I(\R^n))$.
\end{lemma}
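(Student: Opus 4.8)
The plan is to argue by contradiction, using the volume lower bound (assumption~3 of Theorem~\ref{t-C0}) to upgrade the ``small total volume'' estimate of Lemma~\ref{l-aux1} into a pointwise bound. Write $W:=I(\R^n)$ and consider $h\colon M\to\R$, $h(x)=\dist_{\R^{2n}}(\Phi(x),W)$. Since $\Phi$ is $2n$-Lipschitz and $\dist(\cdot,W)$ is $1$-Lipschitz, $h$ is $2n$-Lipschitz; and $h\le 2n\delta$ on $\pd M$ by \eqref{Phi-dm}, since $I(F(x))\in W$ there. As $M$ is compact, $h$ attains its maximum at some $p\in M$; set $r=h(p)$. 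It suffices to show $r=\ep(\delta)$, because then $\Phi(M)\subset U_{2r}(W)=U_{\ep(\delta)}(W)$; so we may assume $r>4n\delta$.

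First I would consider the metric ball $B=B_{r/4n}(p)$ and check that $B\cap\pd M=\emptyset$: if some $q\in\pd M$ had $d_M(p,q)<r/4n$, then the Lipschitz bound for $h$ together with $h(q)\le 2n\delta$ would give $r=h(p)\le h(q)+2n\,d_M(p,q)<2n\delta+r/2$, i.e.\ $r<4n\delta$, a contradiction. Next, for every $x\in B$ one has $h(x)\ge h(p)-2n\,d_M(p,x)>r/2$, hence $\Phi(x)\notin U_{r/2}(W)$; thus $B\subset\Phi^{-1}(\R^{2n}\setminus U_{r/2}(W))$. Since moreover $r/4n>\delta$, assumption~3 of Theorem~\ref{t-C0} applies to $B$ and yields $\vol(B)\ge\lambda(r/4n)^n$. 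Combining these,
$$
 \lambda\,(r/4n)^n\le\vol\bigl(\Phi^{-1}(\R^{2n}\setminus U_{r/2}(W))\bigr).
$$

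It remains to deduce $r=\ep(\delta)$ from this. I would fix an arbitrary constant $c>0$ and show that $r<c$ provided $\delta$ is small enough: if $r\ge c$, then $U_{r/2}(W)\supset U_{c/2}(W)$, so the right-hand side above is at most $\vol\bigl(\Phi^{-1}(\R^{2n}\setminus U_{c/2}(W))\bigr)$, which is $<\ep(\delta)$ by Lemma~\ref{l-aux1} applied with the \emph{fixed} radius $c/2$; for $\delta$ small this is smaller than $\lambda(c/4n)^n\le\lambda(r/4n)^n$, a contradiction. As $c$ was arbitrary, $r=\ep(\delta)$. The point I expect to require most care is this interaction with $\pd M$: a priori \eqref{Phi-near-dm} only puts $p$ at distance $\ge\delta$ from $\pd M$, far too little to run assumption~3 on a ball of radius $\sim r$, and it is exactly the boundary estimate $h\le 2n\delta$ combined with the $2n$-Lipschitz bound that forces $B_{r/4n}(p)$ to miss $\pd M$ unless $r$ was already $O(\delta)$. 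A secondary, bookkeeping subtlety is that Lemma~\ref{l-aux1} controls the volume only for a radius fixed in advance, which is why $r/2$ cannot be substituted into it directly and the auxiliary constant $c$ is introduced.
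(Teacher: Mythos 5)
Your proof is correct and follows essentially the same route as the paper: a metric ball of radius $r/4n$ about a point whose $\Phi$-image is far from $I(\R^n)$ maps outside $U_{r/2}(I(\R^n))$, so Lemma \ref{l-aux1} caps its volume while assumption (3) of Theorem \ref{t-C0} bounds it below, forcing $r=\ep(\delta)$. Your two refinements — checking explicitly that the ball misses $\pd M$ via the boundary bound $h\le 2n\delta$, and introducing the fixed constant $c$ to respect the fixed-radius form of Lemma \ref{l-aux1} — are exactly the points the paper handles implicitly (via \eqref{Phi-near-dm} and its contrapositive phrasing), so the arguments match in substance.
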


\begin{proof}
Suppose the contrary. Then there exists $r>0$ such that
for every $\delta>0$ there is a manifold $M$ satisfying
the assumptions of Theorem \ref{t-C0} and
maps $\Phi$ and $I$ constructed as above such that
$\dist(\Phi(p),I(\R^n))\ge r$ for some $p\in M$.

Choose such $M$, $\Phi$, $I$ and $p$ for a sufficiently small $\delta$.
We may assume that $r>8n\delta$. Consider a metric ball
$B=B_{r/4n}(p)$. Since $\Phi$ is $2n$-Lipschitz,
we have
$$
 \Phi(B) \subset B_{r/2}(\Phi(p)) \subset \R^{2n}\setminus U_{r/2}(I(\R^n)) .
$$
Therefore $\vol(B) < \ep(\delta)$ by Lemma \ref{l-aux1},
and the 3rd assumption of Theorem \ref{t-C0} implies that
$r<\ep(\delta)$, a contradiction.
\end{proof}

Now let $P_i:\R^{2n}\to\R$, $i=1,\dots,2n$, denote the coordinate projections
multiplied by $\sqrt2$.
Observe that $\phi_v=P_{n+1}\circ\Phi$ and $L_v=P_{n+1}\circ I$.
Define $P:\R^{2n}\to\R^n$ by $P=(P_1,\dots,P_n)$,
then $P\circ I=\id_{\R^n}$ and $P\circ\Phi=\phi$.
By Lemma \ref{Phi-nbhd}, for a given $x\in M$
there is a point $x'\in\R^n$
such that $|\Phi(x)-I(x')|<\ep(\delta)$.
Then
$$
 |\phi(x)-x'| = |P(\Phi(x))-P(I(x'))| \le \sqrt2 |\Phi(x)-I(x')| < \ep(\delta)
$$
where the first inequality follows from the fact that $P$ is $\sqrt2$-Lipschitz.
Hence
$$
  |L_v(\phi(x))-L_v(x')| \le |\phi(x)-x'|< \ep(\delta) .
$$
Furthermore,
$$
 |\phi_v(x)-L_v(x')| = |P_{n+1}(\Phi(x))-P_{n+1}(I(x'))| \le \sqrt2|\Phi(x)-I(x')|< \ep(\delta)
$$
The last two inequalities yield \eqref{e-phiv}.
This completes the proof of Proposition \ref{p-phi}.

\section{Estimating distances in $M$}\label{sec-c0proof}

In this section we finish the proof of Theorem~\ref{t-C0}
by showing that $\phi$ almost preserves the distances
(up to an additive term $\ep(\delta)$).

\begin{lemma}\label{l-phi-almost-short}
$|\phi(x)-\phi(y)|< d_M(x,y)+\ep(\delta)$
for all $x,y\in M$.
\end{lemma}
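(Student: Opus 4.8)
The plan is to bound $|\phi(x)-\phi(y)|$ by bounding each coordinate difference $|\phi_{e_i}(x)-\phi_{e_i}(y)|$ and then combining, but in fact it is cleaner to work one linear functional at a time and use assertion \eqref{e-phiv} of Proposition \ref{p-phi}. Fix $x,y\in M$. The key observation is that \eqref{e-phi-maxlip} provides a boundary point realizing $\phi_v$ as a ``distance from the boundary'' function: for any unit vector $v$ there is $z\in\pd M$ with $\phi_v(x)=\phi_v(z)+d_M(x,z)$, and by the very definition \eqref{phidef} of $\phi_v$ as an infimum, $\phi_v(y)\le d_M(y,z)+L_v(F(z))$. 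Since $\phi_v$ is $1$-Lipschitz and (by \eqref{phi1}) $\phi_v(z)\approx L_v(F(z))$ on $\pd M$, combining these with the triangle inequality $d_M(y,z)\le d_M(x,y)+d_M(x,z)$ gives
$$
\phi_v(y)-\phi_v(x)\le d_M(y,z)+L_v(F(z))-\phi_v(z)-d_M(x,z)\le d_M(x,y)+\ep(\delta).
$$

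Next I would upgrade this from a one-sided estimate on $\phi_v$ to the desired estimate on $|\phi(x)-\phi(y)|$. Apply the bound just obtained with $v$ chosen to be the unit vector in the direction of $\phi(y)-\phi(x)$ (assuming $\phi(x)\ne\phi(y)$, else there is nothing to prove). Then by \eqref{e-phiv} we have $\phi_v(x)\approx L_v(\phi(x))$ and $\phi_v(y)\approx L_v(\phi(y))$, so
$$
|\phi(x)-\phi(y)|=L_v(\phi(y))-L_v(\phi(x))\approx \phi_v(y)-\phi_v(x)\le d_M(x,y)+\ep(\delta),
$$
which is exactly the claimed inequality (after absorbing the two $\ep(\delta)$ terms coming from \eqref{e-phiv} into a single $\ep(\delta)$).

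The step I expect to require the most care is verifying that all the $\ep(\delta)$ terms are genuinely uniform, i.e.\ that they depend only on $D$, $\lambda$ and $\delta$, and not on the particular point $v$ chosen (which itself depends on $x$, $y$ and $M$). This is not a real obstacle because assertion \eqref{e-phiv} of Proposition \ref{p-phi} is stated uniformly over all unit vectors $v$, and the inequality \eqref{phi1} is likewise uniform in $v$; but it is the place where one must be slightly careful in writing the argument. A minor alternative, avoiding \eqref{e-phiv} entirely, is to bound $|\phi(x)-\phi(y)|\le\sqrt n\max_i|\phi_{e_i}(x)-\phi_{e_i}(y)|$ and apply the one-sided estimate to $v=\pm e_i$; this is even more elementary but gives a worse-looking constant in front of $d_M(x,y)$, so I would prefer the version using \eqref{e-phiv} since it yields the sharp multiplicative constant $1$.
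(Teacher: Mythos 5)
Your argument is correct and is essentially the paper's proof: choose $v$ along $\phi(x)-\phi(y)$, apply \eqref{e-phiv} at both endpoints, and bound $\phi_v(x)-\phi_v(y)$ by $d_M(x,y)$. The only difference is that your first step (the detour through \eqref{e-phi-maxlip}, the definition \eqref{phidef} and \eqref{phi1}) is unnecessary, since $\phi_v$ is $1$-Lipschitz by construction as a pointwise infimum of $1$-Lipschitz functions, which gives $\phi_v(y)-\phi_v(x)\le d_M(x,y)$ directly and without the extra $\ep(\delta)$.
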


\begin{proof}
Let $v$ be a unit vector in $\R^n$ such that $\phi(x)-\phi(y)$
is a nonnegative multiple of $v$. Then
$$
 |\phi(x)-\phi(y)| = L_v(\phi(x)) - L_v(\phi(y)) \approx \phi_v(x)-\phi_v(y) \le d_M(x,y) .
$$
Here the first relatin follows from the definition of $L_v$,
the second from \eqref{e-phiv}, and the third from the fact that $\phi_v$ is 1-Lipschitz.
\end{proof}

\begin{lemma}\label{l-dist-dm}
$ \dist_M(x,\pd M)\approx\dist_{\R^n}(\phi(x),\pd D) $
for all $x\in M$.
\end{lemma}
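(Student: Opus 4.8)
The plan is to prove the two inequalities
$\dist_M(x,\pd M)\le\dist_{\R^n}(\phi(x),\pd D)+\ep(\delta)$
and
$\dist_{\R^n}(\phi(x),\pd D)\le\dist_M(x,\pd M)+\ep(\delta)$
separately. For the first direction, fix $x\in M$ and, using assertion \eqref{e-phi-maxlip}
of Proposition \ref{p-phi} applied to a suitable direction $v$, produce a point $y\in\pd M$
realizing the relation $\phi_v(x)=\phi_v(y)+d_M(x,y)$; then
$\dist_M(x,\pd M)\le d_M(x,y)=\phi_v(x)-\phi_v(y)$.
Choosing $v$ so that $\phi(x)-\phi(y)$ is a nonnegative multiple of $v$
(as in the proof of Lemma \ref{l-phi-almost-short}) and invoking \eqref{e-phiv}
twice, one gets $\phi_v(x)-\phi_v(y)\approx L_v(\phi(x))-L_v(\phi(y))=|\phi(x)-\phi(y)|$.
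Finally \eqref{phi-dm} gives $|\phi(y)-F(y)|\le n\delta$ with $F(y)\in\pd D$, so
$|\phi(x)-\phi(y)|\ge\dist_{\R^n}(\phi(x),\pd D)-n\delta$, which yields the first inequality.

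For the reverse direction, let $z\in\pd D$ be a nearest point to $\phi(x)$ in $\pd D$,
so $|\phi(x)-z|=\dist_{\R^n}(\phi(x),\pd D)$. By \eqref{e-phi-image} (the assertion about
$\phi(\pd M)$), or more directly by \eqref{phi-dm} together with the surjectivity of
$F\colon\pd M\to\pd D$, there is a point $y\in\pd M$ with $|\phi(y)-z|\le\ep(\delta)$
(pick $y$ with $F(y)$ close to $z$). Then by Lemma \ref{l-phi-almost-short},
$$
 \dist_{\R^n}(\phi(x),\pd D)=|\phi(x)-z|\le|\phi(x)-\phi(y)|+\ep(\delta)
 < d_M(x,y)+\ep(\delta),
$$
and since $y\in\pd M$ we have $d_M(x,y)\ge\dist_M(x,\pd M)$... wait, that inequality
points the wrong way; instead I use that $y$ can be chosen to \emph{minimize}: one needs
a point of $\pd M$ close to $\phi^{-1}$ of a near-boundary point. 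The cleaner route is to
let $y_0\in\pd M$ realize $\dist_M(x,\pd M)=d_M(x,y_0)$; then by Lemma \ref{l-phi-almost-short},
$|\phi(x)-\phi(y_0)|<d_M(x,y_0)+\ep(\delta)=\dist_M(x,\pd M)+\ep(\delta)$, and since
$\phi(y_0)$ lies within $n\delta$ of $\pd D$ by \eqref{phi-dm}, we conclude
$\dist_{\R^n}(\phi(x),\pd D)\le|\phi(x)-\phi(y_0)|+n\delta\le\dist_M(x,\pd M)+\ep(\delta)$.

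Thus the second inequality uses only Lemma \ref{l-phi-almost-short} and \eqref{phi-dm},
while the first inequality is the substantive one, relying on the ``maximal Lipschitz''
property \eqref{e-phi-maxlip} to find a boundary point where a coordinate function of
$\phi$ decreases at unit rate along a shortest path. I expect the main obstacle to be
bookkeeping the choice of the direction $v$ in the first part: one must verify that the
same $v$ can be used both to apply \eqref{e-phi-maxlip} and to identify the Euclidean
distance $|\phi(x)-\phi(y)|$ with $L_v(\phi(x))-L_v(\phi(y))$, which forces $v$ to be the
unit vector in the direction $\phi(x)-\phi(y)$ for the \emph{same} $y$ produced by
\eqref{e-phi-maxlip}; a short argument (or an iteration/compactness remark) is needed to
see this is consistent, after which \eqref{e-phiv} closes the gap up to $\ep(\delta)$.
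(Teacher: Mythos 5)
Your second inequality (that $\dist_{\R^n}(\phi(x),\pd D)\le\dist_M(x,\pd M)+\ep(\delta)$, via a nearest boundary point $y_0$, Lemma \ref{l-phi-almost-short} and \eqref{phi-dm}) is correct and is essentially the paper's own argument for that direction. The problem is the other, substantive direction, $\dist_M(x,\pd M)\le\dist_{\R^n}(\phi(x),\pd D)+\ep(\delta)$: your chain establishes $\dist_M(x,\pd M)\le d_M(x,y)\approx|\phi(x)-\phi(y)|$ for the boundary point $y$ furnished by \eqref{e-phi-maxlip}, but the only relation you then prove between $|\phi(x)-\phi(y)|$ and $\dist_{\R^n}(\phi(x),\pd D)$ is the \emph{lower} bound $|\phi(x)-\phi(y)|\ge\dist_{\R^n}(\phi(x),\pd D)-n\delta$. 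From $A\le B+\ep(\delta)$ and $B\ge C-\ep(\delta)$ nothing follows about $A$ versus $C$, so the last sentence of your first part is a non sequitur. The point $y$ where the infimum in \eqref{phidef} is attained is just some boundary point; there is no reason for $\phi(y)$ to be near the point of $\pd D$ closest to $\phi(x)$, and $|\phi(x)-\phi(y)|$ can be of the order of $\diam(D)$ even when $\phi(x)$ is right next to $\pd D$. The circularity you flag (choosing $v$ in the direction of $\phi(x)-\phi(y)$ while $y$ depends on $v$) is a real but secondary defect: even granting a fixed point, the estimate still does not close.

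The missing idea is the paper's choice of $v$: it is determined by $\phi(x)$ alone, not by $y$. Take $p\in\pd D$ nearest to $\phi(x)$ and let $v$ be the inner normal to a supporting hyperplane of $D$ at $p$. Convexity of $D$ then makes $p$ a minimum point of $L_v|_{\pd D}$, so $L_v(\phi(y))\ge L_v(p)-\ep(\delta)$ for \emph{every} $y\in\pd M$, since $\phi(\pd M)$ lies in a small neighborhood of $\pd D$ by \eqref{e-phi-image}. Combining this with \eqref{e-phi-maxlip} and \eqref{e-phiv} gives $\phi_v(x)\ge L_v(p)+d_M(x,y)-\ep(\delta)\ge L_v(p)+\dist_M(x,\pd M)-\ep(\delta)$, while on the other hand $\phi_v(x)\approx L_v(\phi(x))=L_v(p)+\dist_{\R^n}(\phi(x),\pd D)$ when $\phi(x)\in D$ (the case $\phi(x)\notin D$ being handled by \eqref{e-phi-image}). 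Subtracting yields the desired inequality. Note that convexity of $D$ is essential here and never appears in your argument; without it one cannot control $L_v(\phi(y))$ for the arbitrary boundary point $y$ produced by \eqref{e-phi-maxlip}.
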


\begin{proof}
Fix $x\in M$. Lemma \ref{l-phi-almost-short} implies that
$$
 \dist_M(x,\pd M) > \dist_{\R^n}(\phi(x),\phi(\pd M)) - \ep(\delta)
 > \dist_{\R^n}(\phi(x),\pd D)) - \ep(\delta)
$$
since $\phi(\pd M)$ is contained in a small neighborhood
of $\pd D$ (cf.~\eqref{e-phi-image}).

To prove the opposite inequality,
let $p\in\pd D$ be a point of $\pd D$ nearest to $\phi(x)$
and $v$ the inner normal to $\pd D$ at $p$
(or, if $\pd D$ has no tangent hyperplane at $p$, a normal
to any supporting hyperplane).
If $\phi(x)\in D$, then $\phi(x)-p$ is a nonnegative multiple of $v$
and therefore
$$
  L_v(\phi(x)) = L_v(p) + |p-\phi(x)| = L_v(p)+\dist_{\R^n}(\phi(x),\pd D)
$$
by the definition of $L_v$.
If $\phi(x)\notin D$, then $\dist_{\R^n}(\phi(x),\pd D)\approx 0$
by \eqref{e-phi-image}, hence $\phi(x)\approx p$ and
$L_v(\phi(x)) \approx L_v(p)$.
In both cases we have
\be\label{bdist1}
 \phi_v(x)\approx L_v(\phi(x))\approx  L_v(p)+\dist_{\R^n}(\phi(x),\pd D)
\ee
where the first relation follows from \eqref{e-phiv}.
By \eqref{e-phi-maxlip} and \eqref{e-phiv},
$$
 \phi_v(x) = \phi_v(y)+d_M(x,y)\approx L_v(\phi(y)) + d_M(x,y)
$$
for some $y\in\pd M$.
Since $D$ is convex, $p$ is a point of minimum of $L_v|_{\pd D}$.
Since $\phi(y)$ is close to $\pd D$
(by~\eqref{e-phi-image}),
it follows that $L_v(\phi(y)) > L_v(p)-\ep(\delta)$.
Thus
$$
 \phi_v(x) > L_v(p) + d_M(x,y) -\ep(\delta) \ge L_v(p) + \dist_M(x,\pd M)-\ep(\delta) .
$$
This and \eqref{bdist1} imply that
$
 \dist_M(x,\pd M) < \dist_{\R^n}(\phi(x),\pd D) + \ep(\delta)
$.
\end{proof}

\begin{lemma}\label{l-curve}
For every $r>0$ there is a $\delta_0>0$ such that the following holds:
if $\delta<\delta_0$, $x,y\in M$,
$|\phi(x)-\phi(y)| \le r$ and
$\dist_{\R^n}(\phi(x),\pd D) \ge 3r$,
then there is a curve $\gamma$ connecting $x$ and $y$ in $M$
such that $\phi(\gamma)\subset B_{2r}(\phi(x))$.
\end{lemma}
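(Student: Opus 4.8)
The plan is to construct $\gamma$ as the preimage under $\phi$ of a straight segment, realized by following gradient-like directions of the distance-like functions $\phi_w$. Set $r'=\dist_{\R^n}(\phi(x),\pd D)\ge 3r$ and let $w$ be the unit vector with $\phi(y)-\phi(x)$ a nonnegative multiple of $w$, so $L_w(\phi(y))-L_w(\phi(x))=|\phi(x)-\phi(y)|\le r$. By \eqref{e-phiv} we have $\phi_w(x)\approx L_w(\phi(x))$ and $\phi_w(y)\approx L_w(\phi(y))$, so $\phi_w(y)-\phi_w(x)\le r+\ep(\delta)$. The idea is to build a curve from $y$ along which $\phi_w$ decreases at unit rate until its $\phi_w$-value drops to roughly $\phi_w(x)$, and to control that the whole curve stays in the metric ball $B_{2r}(\phi(x))$ via the coordinate functions; then reverse roles to close up at $x$. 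The cleanest way to produce such a curve is to use property \eqref{e-phi-maxlip}: from any point $z\in M'$ there is $y_z\in\pd M$ with $\phi_w(z)=\phi_w(y_z)+d_M(z,y_z)$, and a minimizing geodesic from $z$ to $y_z$ decreases $\phi_w$ at unit speed. Following such a geodesic from $y$ for time $t=\phi_w(y)-\phi_w(x)+\ep(\delta)\le r+\ep(\delta)$ (stopping early if we would leave $M'$) produces a point $z$ with $\phi_w(z)\le\phi_w(x)$; since $\phi$ is $n$-Lipschitz the displacement $d_M(y,z)\le r+\ep(\delta)$ keeps us in $B_{(n+1)r+\ep(\delta)}(\phi(x))$ in $\R^n$, which after shrinking $\delta$ and possibly rescaling the statement's constants lies well inside the region where $B_{3r}(\phi(x))\cap\pd D=\emptyset$, so we never reach $\pd M$; hence the stopping is governed only by $\phi_w$.

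Having reached $z$ with $\phi_w(z)\le\phi_w(x)$ and, symmetrically, building from $x$ a curve to a point $z'$ with $\phi_w(z')\le\phi_w(y)-\big(\phi_w(y)-\phi_w(x)\big)$ is vacuous; instead I would iterate the construction in a balanced way. Concretely, I would subdivide the segment $[\phi(x),\phi(y)]$ into $N$ pieces of length $\le r/N$ and, at each stage, move by a short geodesic that decreases the appropriate linear functional's $\phi$-version, using \eqref{e-phi-maxlip} at each substep and \eqref{e-phiv} to translate between $\phi_w$ and $L_w\circ\phi$. Passing to the limit $N\to\infty$ (or using a compactness/Arzel\`a--Ascoli argument on the $n$-Lipschitz curves so produced) yields a limiting curve $\gamma$ from $x$ to $y$. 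Along $\gamma$, the function $L_w\circ\phi$ is monotone up to an $\ep(\delta)$ error and the transverse coordinates $L_{w'}\circ\phi$, $w'\perp w$, change by at most $\ep(\delta)$ at each substep and hence by $\ep(\delta)$ in total; combined with $|\phi(x)-\phi(y)|\le r$ this forces $\phi(\gamma)\subset B_{r+\ep(\delta)}(\phi(x))\subset B_{2r}(\phi(x))$ once $\delta<\delta_0(r)$.

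The main obstacle is controlling the transverse drift: a single application of \eqref{e-phi-maxlip} gives a geodesic that decreases $\phi_w$ at unit speed, but says nothing directly about the other coordinates $\phi_{w'}$, and a long geodesic could in principle wander far in those directions. The resolution is that the geodesic has length $\le r+\ep(\delta)$ only if we stop it at the right $\phi_w$-level, but to guarantee we actually reach that level \emph{and} stay near the segment we must interleave short moves in the $w$-direction with corrections; the error bookkeeping must show the accumulated transverse error stays $o(1)$ as $\delta\to 0$ uniformly in the number of substeps. This is where \eqref{e-phiv} is used crucially: it pins $\phi_{w'}(z)$ to $L_{w'}(\phi(z))$ with error $\ep(\delta)$ independent of $z$, so after each substep of size $\eta$ the point $\phi(z)$ is within $\ep(\delta)+C\eta$ of the segment, and choosing the substep count $N$ large (depending on $r$ but not $\delta$) and then $\delta$ small makes the total deviation less than $r$. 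A secondary technical point is ensuring all intermediate points stay in $M'$ so that \eqref{e-phi-maxlip} and \eqref{e-phiv} apply with their stated $\ep(\delta)$; this follows from $\dist_{\R^n}(\phi(x),\pd D)\ge 3r$ together with Lemma \ref{l-dist-dm} and the fact that $\phi(\gamma)$ stays within $2r$ of $\phi(x)$, so $\dist_M(z,\pd M)\ge r-\ep(\delta)>\delta$ along the way.
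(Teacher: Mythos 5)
There is a genuine gap, and it is exactly at the point your construction is supposed to finish: nothing forces your curve to end at $x$. Following the geodesics supplied by \eqref{e-phi-maxlip} (which indeed decrease $\phi_w$ at unit speed) and using \eqref{e-phiv} to translate to $L_w\circ\phi$, you can produce a curve starting at $y$ whose endpoint $z$ satisfies $\phi(z)\approx\phi(x)$ and whose $\phi$-image stays near the segment $[\phi(x),\phi(y)]$. But the lemma requires a curve from $y$ to the point $x$ itself, and to pass from ``$\phi(z)$ is close to $\phi(x)$'' to ``$z$ is close to $x$ in $M$'' you would need precisely the almost-injectivity statement, Lemma \ref{l-phi-almost-injective} --- which in the paper is proved \emph{after} and \emph{by means of} Lemma \ref{l-curve}, so invoking it here is circular. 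Subdividing into $N$ substeps and taking an Arzel\`a--Ascoli limit does not help: each stage again only controls where the $\phi$-image lands, so the limit is a curve from $y$ to some point in $\phi^{-1}(\phi(x))$ (or nearby), not to $x$. The obstruction is topological: a priori $M$ could contain a far-away region mapped by $\phi$ onto the same part of $\R^n$ (compare the big-sphere example after Theorem \ref{t-C0}), i.e.\ $x$ and $y$ could lie in different connected components of $\phi^{-1}(B_{2r}(\phi(x)))$, and no gradient-flow construction in the $w$-direction alone can exclude this.

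The paper rules this out with a degree-and-volume argument, which is the key idea your proposal is missing. Set $B=B_{2r}(\phi(x))$ and $U=\phi^{-1}(B)$; since $\dist_{\R^n}(\phi(x),\pd D)\ge 3r$ and $\phi(M\setminus M')\subset U_{2n\delta}(\pd D)$, one has $U\subset M'$. If the components $U_x\ni x$ and $U_y\ni y$ were distinct, then $\phi|_{U_x}\colon U_x\to B$ is proper and has a mod~2 degree; degree $0$ would make $\phi$ cover $\phi(U_x)$ at least twice, so $\vol(\phi(U_x))\le\tfrac12\vol(U_x)\le\vol(U_x)-cr^n/2$ (using $B_{r/n}(x)\subset U_x$ and assumption (3)), contradicting \eqref{e-phi-vol}; hence both $\phi|_{U_x}$ and $\phi|_{U_y}$ have degree $1$ and are onto $B$, whence $\vol(\phi(U))\le\vol(U_y)\le\vol(U)-\vol(U_x)\le\vol(U)-cr^n$, again contradicting \eqref{e-phi-vol}. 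So $U_x=U_y$ and any curve inside $U$ works. If you want to salvage your approach, you would have to inject an input of this volume-theoretic/degree-theoretic kind; the distance-like functions by themselves cannot see which sheet of $\phi$ you are on.
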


\begin{proof}
We may assume that $2n\delta<r$. Let $B=B_{2r}(\phi(x))$. 
Since $\phi(M\setminus M')\subset U_{2n\delta}(\pd D)$
(cf.~\eqref{phi-near-dm}), we have
$B\cap \phi(M\setminus M')=\emptyset$,
hence the set $U:=\phi^{-1}(B)$ is contained in~$M'$.
Let $U_x$ and $U_y$ be the connected components of $U$ containing
$x$ and $y$, respectively. If $U_x=U_y$ then any curve $\gamma$ connecting
$x$ and $y$ in $U$ satisfies the desired condition.

Suppose that $U_x\ne U_y$. Since $\phi$ is $n$-Lipschitz,
we have $B_{r/n}(x)\subset U_x$.
Hence, by the 3rd assumption of Theorem \ref{t-C0},
$\vol(U_x) \ge cr^n$ where $c=\lambda n^{-n}$.
Consider $\phi_x=\phi|_{U_x}$ regarded as a map from $U_x$ to $B$.
This map is proper and hence has a well-defined degree mod~2.
If $\deg_2(\phi_x)=0$, then every regular value of
any smooth approximation of $\phi_x$
has zero or at least two pre-images.
Since $\phi_x$ is volume non-increasing, it follows that
$$
 \vol (\phi(U_x)) \le \tfrac12 \vol(U_x) \le \vol(U_x) - cr^n/2,
$$
contrary to \eqref{e-phi-vol}.
Thus $\deg_2(\phi_x)=1$.

The same argument applies to $U_y$ and a
map $\phi_y=\phi|_{U_y}\colon U_y\to B$,
therefore $\deg_2(\phi_y)=1$ as well.
Hence both $\phi_x$ and $\phi_y$ are surjective, hence
$$
 \vol(\phi(U))=\vol(\phi(U_y)) \le \vol(U_y) \le \vol(U)-\vol(U_x) \le \vol(U)-cr^n,
$$
contrary to \eqref{e-phi-vol}.
\end{proof}

\begin{lemma}\label{l-phi-almost-injective}
For every $r>0$ there exist $\rho>0$ and $\delta_0>0$ such that the following holds.
If $\delta<\delta_0$ and $x,y\in M$ are such that $|\phi(x)-\phi(y)|<\rho$,
then $d_M(x,y)<r$.

In other words, $d_M(x,y)\to 0$ as $|\phi(x)-\phi(y)|\to 0$ and $\delta\to 0$.
\end{lemma}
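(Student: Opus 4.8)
The plan is to argue by contradiction, producing from a failure of the statement a sequence of manifolds $M_k$ (with $\delta_k\to 0$) and points $x_k,y_k\in M_k$ with $|\phi(x_k)-\phi(y_k)|\to 0$ but $d_{M_k}(x_k,y_k)\ge r$ for a fixed $r>0$. The main tool is Lemma \ref{l-curve}, which lets me connect $x$ to $y$ by a curve $\gamma$ whose $\phi$-image stays inside a small Euclidean ball, \emph{provided} $\phi(x)$ is not too close to $\pd D$. So the first order of business is to reduce to the case where $\phi(x_k)$ and $\phi(y_k)$ lie well inside $D$, say at distance $\ge 3\rho$ from $\pd D$; the boundary case will be handled separately using Lemma \ref{l-dist-dm} together with the fact (\eqref{e-phibd}) that $\phi$ is almost an isometry on $\pd M$.

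\medskip\noindent\textbf{Step 1 (the interior case).}
Suppose first that $\dist_{\R^n}(\phi(x),\pd D)\ge 3\rho$ where $\rho$ is the (yet to be chosen) threshold and $|\phi(x)-\phi(y)|<\rho$. Apply Lemma \ref{l-curve} with this $r$ replaced by $\rho$: for $\delta$ small there is a curve $\gamma$ from $x$ to $y$ with $\phi(\gamma)\subset B_{2\rho}(\phi(x))$. The point is that $\gamma$ need not be short; but its $\phi$-image is confined to a $2\rho$-ball, and I want to conclude $d_M(x,y)$ is small. Here is where I expect the real work: I iterate. Cover $\gamma$ by finitely many points $x=z_0,z_1,\dots,z_N=y$ along it; consecutive $\phi(z_i)$ are close, all $\phi(z_i)$ lie in $B_{2\rho}(\phi(x))$ hence still far from $\pd D$ (shrinking $\rho$ if necessary so $2\rho<\rho'$ for the next stage). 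A naive induction doesn't obviously terminate, so instead I should argue geometrically: the set $U_x=\phi^{-1}(B_{2\rho}(\phi(x)))$-component of $x$ from the proof of Lemma \ref{l-curve} contains $B_{\rho/n}(x)$ and has volume $\ge c\rho^n$; by \eqref{e-phiv} and \eqref{e-phi-maxlip}, for a unit vector $v$ the function $\phi_v$ restricted to a geodesic emanating from $x$ decreases at unit rate until it meets $\pd M$, which forces geodesics from $x$ to leave $U_x$ within controlled time. More cleanly: $\diam_M(U_x)$ is small because $U_x\subset M'$ has all geodesics minimizing there, $\phi|_{U_x}$ is volume non-increasing of degree $1$ onto $B_{2\rho}(\phi(x))$, and any two points of $U_x$ are joined by a minimizing geodesic of $M$ lying in $M'$, along which some $\phi_v$ moves monotonically; combined with \eqref{e-phiv}, a geodesic of length $\ge r$ inside $U_x$ would force $\phi_v$ to change by $\ge r-\ep(\delta)$, pushing $\phi$ outside $B_{2\rho}(\phi(x))$ once $2\rho<r/2$. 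Hence $\diam_M(U_x)<r$, and since both $x,y\in U_x$ (as $\phi(\gamma)\subset B_{2\rho}(\phi(x))$ means $\gamma\subset U$, and $U_x=U_y$ would follow, or one runs the degree argument of Lemma \ref{l-curve} to rule out $U_x\ne U_y$), we get $d_M(x,y)<r$.

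\medskip\noindent\textbf{Step 2 (the boundary case).}
If instead $\dist_{\R^n}(\phi(x),\pd D)<3\rho$, then by Lemma \ref{l-dist-dm} we have $\dist_M(x,\pd M)<3\rho+\ep(\delta)$, so there is $x^*\in\pd M$ with $d_M(x,x^*)<4\rho$ for small $\delta$; similarly $|\phi(y)-\phi(x)|<\rho$ gives $\dist_{\R^n}(\phi(y),\pd D)<4\rho$ and a point $y^*\in\pd M$ with $d_M(y,y^*)<5\rho$. By \eqref{phi-dm}, $|\phi(x^*)-F(x^*)|\le n\delta$ and likewise for $y^*$; also $|\phi(x)-\phi(x^*)|\le n\cdot 4\rho$. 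Chaining these estimates, $|F(x^*)-F(y^*)|$ is $O(\rho)+\ep(\delta)$, and by \eqref{e-F}, $d_M(x^*,y^*)$ is $O(\rho)+\ep(\delta)$. Therefore $d_M(x,y)\le d_M(x,x^*)+d_M(x^*,y^*)+d_M(y^*,y)=O(\rho)+\ep(\delta)$, which is $<r$ once $\rho$ and then $\delta_0$ are chosen small enough. This handles the boundary case directly, with no recursion.

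\medskip\noindent\textbf{Step 3 (assembling).}
Given $r>0$, choose $\rho$ small enough that the $O(\rho)$ bounds in Step 2 are $<r/2$ and that $2\rho$ is below all thresholds needed for Step 1 (including the ``$3\rho$ vs.\ $\rho$'' gap so that points of $B_{2\rho}(\phi(x))$ remain at distance $\ge \rho$ from $\pd D$ when we are in the interior regime). Then choose $\delta_0$ small enough that the various $\ep(\delta)$ terms from Lemmas \ref{l-curve}, \ref{l-dist-dm}, \ref{l-phi-almost-short} and from \eqref{e-phiv}, \eqref{e-F}, \eqref{phi-dm} are $<r/2$ for $\delta<\delta_0$, and small enough for Lemma \ref{l-curve} to apply at scale $\rho$. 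For $\delta<\delta_0$ and $|\phi(x)-\phi(y)|<\rho$, exactly one of Steps 1, 2 applies and yields $d_M(x,y)<r$.

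\medskip
The main obstacle is Step 1: turning the ``$\phi$-image confined to a small ball'' conclusion of Lemma \ref{l-curve} into a genuine $M$-distance bound. The subtlety is that $\phi$ is only Lipschitz and volume non-increasing, not bi-Lipschitz, so short $\phi$-image does not give short preimage for free; one must exploit the degree-$1$ / volume-comparison structure of $\phi|_{U_x}$ together with the monotonicity of $\phi_v$ along $M$-geodesics (via \eqref{e-phiv} and \eqref{e-phi-maxlip}) to bound $\diam_M(U_x)$. Once that diameter bound is in hand the rest is bookkeeping with $\rho$ and $\delta_0$.
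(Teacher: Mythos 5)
Your Step 2 (the boundary case) is essentially the paper's own argument and is fine. The genuine gap is in Step 1, which is exactly where you yourself locate ``the real work.'' Your proposed mechanism --- that along a minimizing geodesic between two points of $U_x$ ``some $\phi_v$ moves monotonically,'' so a geodesic of length $\ge r$ inside $U_x$ forces $\phi_v$ to change by $\ge r-\ep(\delta)$ and hence pushes $\phi$ out of $B_{2\rho}(\phi(x))$ --- is not justified and is false in general. What \eqref{e-phi-maxlip} gives is, for each point, \emph{one} particular shortest path to a boundary point along which $\phi_v$ decreases at unit rate; along an arbitrary geodesic $\phi_v$ is merely $1$-Lipschitz and can be nearly constant (already in the Euclidean model $L_v$ is constant along segments orthogonal to $v$), so no lower bound on its variation follows. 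Moreover, in the setting of Theorem \ref{t-C0} you may not assume that geodesics in $M'$ are minimizing (that is only established later, under the $C^1$ hypotheses of Theorem \ref{t-C1}), nor that a shortest path between points of $U_x$ stays in $U_x$ or in $M'$. Finally, the degree-$1$ / volume-non-increasing structure of $\phi|_{U_x}$ by itself cannot bound $\diam_M(U_x)$: it does not exclude a long thin piece of negligible volume with tiny $\phi$-image, which is precisely the pathology to be ruled out.

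The missing idea is a volume count \emph{along the curve} $\gamma$ produced by Lemma \ref{l-curve}, using assumption (3) of Theorem \ref{t-C0} quantitatively. Since $d_M(x,y)\ge r$, the function $d_M(x,\cdot)$ along $\gamma$ sweeps $[0,r]$, so one can choose $N\approx nr/\rho$ points $x_k$ on $\gamma$ with $d_M(x,x_k)=(k-1)\rho/n$; the balls $B_{\rho/2n}(x_k)$ are disjoint, lie in $M'$, and by assumption (3) their union $U$ has $\vol(U)\ge \mu N\rho^n\ge\mu r\rho^{n-1}$ with $\mu=\lambda(2n)^{-n}$. On the other hand $\phi(U)\subset B_{5\rho/2}(\phi(x))$ (because $\phi$ is $n$-Lipschitz and $\phi(x_k)\in B_{2\rho}(\phi(x))$), so $\vol(\phi(U))\le C\rho^n$, while \eqref{e-phi-vol} forces $\vol(\phi(U))>\vol(U)-\ep(\delta)$. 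Choosing $\rho=\mu r/2C$ and $\delta$ small yields the contradiction. This is the step your proposal gestures at (you even begin placing points along $\gamma$ before abandoning it) but does not carry out; without it the interior case is not proved.
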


\begin{proof}
Suppose that $r,\rho>0$ and $x,y\in M$ are such that $d_M(x,y)\ge r$
and $|\phi(x)-\phi(y)|<\rho$.
We are going to obtain a contradiction assuming that
$\rho=cr$ for a suitable constant $c>0$ and $\delta\ll r$.

First consider the case when $\dist_{\R^n}(\phi(x),\pd D)\ge 3\rho$.
By Lemma \ref{l-curve} there is a curve $\gamma$ connecting
$x$ and $y$ such that $\phi(\gamma)\subset B_{2\rho}(\phi(x))$.
Let $N$ be a positive integer such that $N\ge\frac{nr}\rho>N-1$.
Then there are points $x_1,\dots,x_N$ on $\gamma$ such that
$d_M(x,x_k)=(k-1)\rho/n$ for all $k$.
The triangle inequality implies that
the balls $B_k:=B_{\rho/2n}(x_k)$ are disjoint.
Denote $U=\bigcup B_k$.
We may assume that $\rho/2n>2\delta$, then by the third assumption of
Theorem \ref{t-C0} we have
$$
 \vol(U) = \sum \vol(B_k) \ge N\cdot \lambda(\rho/2n)^n = \mu N\rho^n
$$
where $\mu=\lambda(2n)^{-n}$.
Since $\phi$ is $n$-Lipschitz, we have $\phi(B_k)\subset B_{\rho/2}(\phi(x_k))$.
Since $\phi(x_k)\in\phi(\gamma)\subset B_{2\rho}(\phi(x))$ for all~$k$, it follows that
$\phi(U)\subset B_{5\rho/2}(\phi(x))$, hence
$
 \vol(\phi(U)) \le C\rho^n
$
where $C$ is the volume of a Euclidean $n$-ball of radius~$5/2$.
Furthermore, $\phi(U)$ is separated away from $\pd M$ by
distance $\rho/2>2n\delta$, hence $U\subset M'$
(cf.~\eqref{phi-near-dm}) and therefore
$
\vol(\phi(U))>\vol(U)-\ep(\delta)
$
by \eqref{e-phi-vol}.
Thus
$$
 C\rho^n \ge \vol(\phi(U))>\vol(U)-\ep(\delta)
\ge \mu N\rho^n-\ep(\delta)  \ge \mu r\rho^{n-1}-\ep(\delta)
$$
since $N\ge r/\rho$. Fix $\rho=\mu r/2C$ and assume that $\delta$
is so small that the above $\ep(\delta)$ satisfies
$\ep(\delta)<\frac12\mu r\rho^{n-1}$.
Then $C\rho^n >\frac12\mu r\rho^{n-1}$,
hence $r< 2C\rho/\mu=r$, a contradiction.

It remains to consider the case when $\dist_{\R^n}(\phi(x),\pd D)< 3\rho$.
Let $x'$ and $y'$ be points of $\pd M$ nearest to $x$ and~$y$ respectively.
Then Lemma \ref{l-dist-dm} imples that
$$
 d_M(x,x') = \dist_M(x,\pd M) \approx \dist_{\R^n}(\phi(x),\pd D)< 3\rho
$$
and
$$
 d_M(y,y') = \dist_M(y,\pd M) \approx \dist_{\R^n}(\phi(y),\pd D)
 \le \dist_{\R^n}(\phi(x),\pd D) + |\phi(x)-\phi(y)| < 4\rho .
$$
Since $\phi$ is $n$-Lipschitz, it follows that
$$
|\phi(x)-\phi(x')|<3n\rho+\ep(\delta) \quad\text{and}\quad
|\phi(y)-\phi(y')|<4n\rho+\ep(\delta) ,
$$
hence
$$
 |\phi(x')-\phi(y')|<|\phi(x)-\phi(y)|+7n\rho+\ep(\delta)\le (7n+1)\rho+\ep(\delta).
$$
By \eqref{e-phibd},
$$
 d_M(x',y') \approx |\phi(x')-\phi(y')|<(7n+1)\rho+\ep(\delta) .
$$
Therefore
$$
 r\le d_M(x,y) \le d_M(x,x')+d_M(x',y')+d_M(y,y') < (7n+8)\rho+\ep(\delta) .
$$
This is impossible if $\rho\le\frac12(7n+8)^{-1}r$ and $\delta\ll r$.
\end{proof}

\begin{lemma}\label{l-phi-almost-isometry}
$d_M(x,y)\approx |\phi(x)-\phi(y)|$ for all $x,y\in M$.
\end{lemma}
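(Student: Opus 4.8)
The statement splits into the two inequalities $|\phi(x)-\phi(y)|\le d_M(x,y)+\ep(\delta)$ and $d_M(x,y)\le|\phi(x)-\phi(y)|+\ep(\delta)$. The first is precisely Lemma~\ref{l-phi-almost-short}, so the plan is to concentrate on the second, which amounts to producing, for given $x,y\in M$, a path from $x$ to $y$ of length at most $|\phi(x)-\phi(y)|+\ep(\delta)$. Using \eqref{e-phi-image} I would first move $\phi(x)$ and $\phi(y)$ onto $D$ at the cost of $\ep(\delta)$, set $L=|\phi(x)-\phi(y)|$ and, assuming $L>0$ (the case $L=0$ being immediate from Lemma~\ref{l-phi-almost-injective}), put $v=(\phi(x)-\phi(y))/L$, so that $\phi(y)=\phi(x)-Lv$. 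By \eqref{e-phi-maxlip} there is $q\in\pd M$ with $\phi_v(x)=\phi_v(q)+d_M(x,q)$; let $\gamma\colon[0,\ell]\to M$, $\ell=d_M(x,q)$, be a unit-speed shortest path from $x$ to $q$. Since $\phi_v$ is $1$-Lipschitz and $\gamma$ is minimizing, $\phi_v$ decreases at exactly unit rate along $\gamma$, i.e.\ $\phi_v(\gamma(t))=\phi_v(x)-t$.

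The heart of the argument is the claim that $\phi\circ\gamma$ is, up to an $\ep(\delta)$ error, the straight Euclidean segment issuing from $\phi(x)$ in direction $-v$: for all $t\in[0,\ell]$, $\phi(\gamma(t))=\phi(x)-tv+\ep(\delta)$. To prove it I would estimate, for $0\le s\le t\le\ell$, the displacement $\phi(\gamma(t))-\phi(\gamma(s))$: its length is at most $d_M(\gamma(t),\gamma(s))+\ep(\delta)=(t-s)+\ep(\delta)$ by Lemma~\ref{l-phi-almost-short} together with the minimality of $\gamma$, while its component along $v$ equals $L_v(\phi(\gamma(s)))-L_v(\phi(\gamma(t)))=(t-s)+\ep(\delta)$ by \eqref{e-phiv} and the unit-rate formula above. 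Since $\diam M=O(1)$ (which follows from Lemma~\ref{l-phi-almost-injective}), subtracting squares shows the component of $\phi(\gamma(t))-\phi(\gamma(s))$ orthogonal to $v$ has squared length $\le\ep(\delta)$; taking $s=0$ gives the claim.

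With the claim in hand the conclusion is short. If $\ell\ge L$, set $\tilde p=\gamma(L)$; a subpath of a shortest path is shortest, so $d_M(x,\tilde p)=L$, and $\phi(\tilde p)=\phi(x)-Lv+\ep(\delta)=\phi(y)+\ep(\delta)$, whence $d_M(\tilde p,y)=\ep(\delta)$ by Lemma~\ref{l-phi-almost-injective} and $d_M(x,y)\le d_M(x,\tilde p)+d_M(\tilde p,y)=L+\ep(\delta)$. If $\ell<L$, the claim puts $\phi(q)=\phi(x)-\ell v+\ep(\delta)$ within $\ep(\delta)$ of an interior point of the segment $[\phi(x),\phi(y)]\subset D$; on the other hand $q\in\pd M$ forces $\dist_{\R^n}(\phi(q),\pd D)=\ep(\delta)$ by \eqref{phi-dm}, and since $\dist_{\R^n}(\cdot,\pd D)$ is concave on the convex set $D$ this is only possible if $\min\{\dist_{\R^n}(\phi(x),\pd D),\dist_{\R^n}(\phi(y),\pd D)\}=\ep(\delta)$, i.e.\ (by Lemma~\ref{l-dist-dm}) one of $x,y$ lies within $\ep(\delta)$ of $\pd M$. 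Replacing that point, say $x$, by a nearest boundary point $x'\in\pd M$ changes $d_M$ and $\phi$ only by $\ep(\delta)$ and reduces matters to the case $x'\in\pd M$. That case I would handle by the same device run from $y$ with $v'=(\phi(y)-\phi(x'))/|\phi(y)-\phi(x')|$: its analogue of the first case is as above, and in its analogue of the second the endpoint $q'$ of the gradient curve again lies on $\pd M$, so $d_M(q',x')\approx|\phi(q')-\phi(x')|$ by \eqref{e-phibd} and a direct computation yields $d_M(x',y)\le|\phi(x')-\phi(y)|+\ep(\delta)$. Together with Lemma~\ref{l-phi-almost-short} this gives $d_M(x,y)\approx|\phi(x)-\phi(y)|$.

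The main obstacle is the key step of the second paragraph: controlling how $\phi$ transports a shortest path of $M$, i.e.\ showing that a curve along which one coordinate function $\phi_v$ drops at unit rate is actually mapped by $\phi$ onto (an $\ep(\delta)$-approximation of) a Euclidean segment — this is where \eqref{e-phiv} and the $1$-Lipschitz bound on $\phi_v$ combine. A secondary point requiring care is confining the degenerate case $\ell<L$, where the $\phi_v$-gradient curve meets $\pd M$ prematurely, to a neighborhood of $\pd M$ so that Lemma~\ref{l-phi-almost-injective} and \eqref{e-phibd} can close the argument; the rest is routine manipulation of the $\ep(\delta)$ terms.
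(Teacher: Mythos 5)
Your proposal is correct and runs on essentially the same engine as the paper's proof: a shortest path to $\pd M$ produced by \eqref{e-phi-maxlip} along which $\phi_v$ drops at unit rate, assertion \eqref{e-phiv} forcing its $\phi$-image to follow the ray through $\phi(x)$ in direction $-v$, then Lemma~\ref{l-phi-almost-injective} when that image reaches $\phi(y)$, and Lemma~\ref{l-dist-dm} together with \eqref{e-phibd} in the near-boundary case --- the only difference being organizational, since the paper handles the boundary case via the normalization $\dist_{\R^n}(\phi(x),\pd D)\ge\dist_{\R^n}(\phi(y),\pd D)$ where you use concavity of $\dist_{\R^n}(\cdot,\pd D)$ and a second run of the device from a boundary point. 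One small repair: the $O(1)$ bound on $t-s$ needed in your ``subtracting squares'' step does not follow from Lemma~\ref{l-phi-almost-injective}; it follows instead from the boundedness of $\phi(M)$ given by \eqref{e-phi-image} (e.g. $t-s\le\langle\phi(\gamma(s))-\phi(\gamma(t)),v\rangle+\ep(\delta)\le\diam(\phi(M))+\ep(\delta)$), or from Lemma~\ref{l-dist-dm} combined with assumption (1) of Theorem~\ref{t-C0}.
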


\begin{proof}
By Lemma \ref{l-phi-almost-short}, it suffices to show that
$$
 d_M(x,y) < |\phi(x)-\phi(y)| + \ep(\delta) .
$$
Without loss of generatilty assume that
$\dist_{\R^n}(\phi(x),\pd D) \ge \dist_{\R^n}(\phi(y),\pd D)$.
Let $v$ be a unit vector in $\R^n$ such that $\phi(x)-\phi(y)$
is a nonnegative multiple of~$v$.
By \eqref{e-phi-maxlip} there is a point $z\in\pd M$ such that
$d_M(x,z)=\phi_v(x)-\phi_v(z)$.
Let $\gamma$ be a shortest path from $x$ to~$z$ in~$M$
and $q$ an arbitrary point on~$\gamma$, then $d_M(x,z)=d_M(x,q)+d_M(q,z)$.
Since $\phi_v$ is 1-Lipschitz, we have
$\phi_v(x)-\phi_v(q)\le d_M(x,q)$
and $\phi_v(q)-\phi_v(z)\le d_M(q,z)$.
If any of these two inequalities is strict, adding them yields
that $\phi_v(x)-\phi_v(z)<d_M(x,z)$, contrary to the choice of~$z$.
Thus $\phi_v(x)-\phi_v(q)=d_M(x,q)$.

Lemma \ref{l-phi-almost-short} implies that $d_M(x,q)>|\phi(x)-\phi(q)|-\ep(\delta)$.
By  \eqref{e-phiv} we have
\be\label{q-estimate}
d_M(x,q)=\phi_v(x)-\phi_v(q)\approx L_v(\phi(x))-L_v(\phi(q)) = \langle \phi(x)-\phi(q), v\rangle ,
\ee
therefore
$
 \langle \phi(x)-\phi(q), v\rangle > |\phi(x)-\phi(q)| - \ep(\delta) 
$.
This implies that the vector $\phi(x)-\phi(q)$ is $\ep(\delta)$-close
to a positive multiple of~$v$. Since $q$ is an arbitrary point on $\gamma$,
this means that $\phi(\gamma)$ is contained in an $\ep(\delta)$-neighborhood
of the ray $R:=\{\phi(x)-tv:t\ge 0\}$.

Since $z\in\pd M$, $\phi(z)$ is close to $\pd D$ (cf.~\eqref{e-phi-image}).
Since the curve $\phi(\gamma)$ connects $\phi(x)$ to $\phi(z)\in U_{\ep(\delta)}(\pd D)$,
$\phi(\gamma)\subset U_{\ep(\delta)}(R)$,
$\phi(y)\in R\cap \phi(M)\subset R\cap U_{\ep(\delta)}(D)$
and $D$ is convex,
there are two possibilities:
either $\phi(x)$ is close to $\pd D$ or $\phi(\gamma)$ passes near $\phi(y)$.
In the former case $\phi(y)$ is close to $\pd D$ as well (by our initial
assumption), and the desired assertion follows from Lemma \ref{l-dist-dm}
and \eqref{e-phibd}. In the latter case
consider a point $q\in\gamma$ such that $|\phi(q)-\phi(y)|<\ep(\delta)$.
Since $q\in\gamma$, we have $d_M(x,q)< |\phi(x)-\phi(q)|+\ep(\delta)$
by \eqref{q-estimate}
By Lemma \ref{l-phi-almost-injective},
the inequality $|\phi(q)-\phi(y)|<\ep(\delta)$
implies that $d_M(q,y)<\ep(\delta)$,
therefore
$$
d_M(x,y)\approx d_M(x,q) < |\phi(x)-\phi(q)|+\ep(\delta) \approx |\phi(x)-\phi(y)|
$$
and the lemma follows.
\end{proof}

\begin{proof}[Proof of Theorem \ref{t-C0}]
By Lemma \ref{l-phi-almost-isometry},
$\phi$ is an $\ep(\delta)$-approximation of $\phi(M)\subset\R^n$,
hence $d_{GH}(M,\phi(M))<\ep(\delta)$. By \eqref{e-phi-image},
the Hausdorff distance in $\R^n$ between $\phi(M)$ and $D$ is small,
hence $d_{GH}(\phi(M),D)<\ep(\delta)$. Therefore
$$
 d_{GH}(M,D) \le d_{GH}(M,\phi(M))+d_{GH}(\phi(M),D) < \ep(\delta)
$$
and the theorem follows.
\end{proof}

\section{Proof of Theorem \ref{t-C1}}
\label{sec-c1proof}

Let $M$ be a compact Riemannian $n$-manifold with boundary.
We use the notation introduced in section \ref{sec-santalo},
namely $SM$ denotes the unit tangent bundle of $M$,
$S^+_pM$ (where $p\in\pd M)$ is the hemisphere
of inward-pointing vectors from $S_pM$,
$\gamma_v$ is the maximal forward geodesic
with initial velocity vector $v\in SM$
and $\ell(v)$ or $\ell_M(v)$ is the length of $\gamma_v$.
Clearly $\ell$ is a lower semi-continuous function
from $SM$ to $[0,+\infty]$.

We say that a unit-speed curve $\gamma:[a,b]\to M$ is \textit{minimizing}
(or a \textit{minimizer}, or a \textit{shortest path}) if it realizes the
distance between $\gamma(a)$ and $\gamma(b)$.
Since $M$ is compact, every pair of points is connected by a minimizer.
Note that a minimizer is not necessarily
a geodesic since it may bend along the boundary.

We need some basic facts about minimizers in Riemannian
manifolds with boundary (see e.g.~\cite{ABB}): every
minimizer is $C^1$ and (point-wise) convergence of minimizers
implies  convergence of their tangents.

If two points $x,y\in M$ are such that all shortest paths from  $x$ to $y$
have the same velocity vector at $x$, we denote this vector
by $\overrightarrow{xy}$ and say
that $\overrightarrow{xy}$ is uniquely defined.

\begin{lemma}\label{l-dbd}
If $bd_M$ is differentiable at a point $(x,y)\in\pd M\times\pd M$,
then $\overrightarrow{xy}$ is uniquely defined and the projection
of $\overrightarrow{xy}$ to $T_x\pd M$ equals
the Riemannian gradient of the function $-bd_M(\cdot,y)$.
\end{lemma}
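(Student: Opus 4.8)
The plan is to express the one-sided directional derivative of the function $x\mapsto bd_M(x,y)$ along $\pd M$ in terms of the initial velocity of a minimizing geodesic from $x$ to $y$, using the first variation formula. Fix a minimizer $\gamma$ from $x$ to $y$ with unit initial velocity $w:=\dot\gamma(0)\in S_xM$. For a smooth curve $c:(-\epsilon,\epsilon)\to\pd M$ with $c(0)=x$ and $\dot c(0)=u\in T_x\pd M$, concatenating the geodesic segment from $c(t)$ to $x$ (to first order) with $\gamma$ gives a path from $c(t)$ to $y$, so the first variation of arclength yields $\frac{d}{dt}\big|_{t=0^+} d_M(c(t),y)\le -\langle u,w\rangle$; applying this to $c$ and to $t\mapsto c(-t)$ gives the two-sided bound, but since $bd_M$ is assumed differentiable at $(x,y)$ the one-sided derivatives agree, forcing $d_x(bd_M(\cdot,y))(u)=-\langle u,w\rangle$ for all $u\in T_x\pd M$. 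Equivalently, the tangential projection of $w$ is the Riemannian gradient of $-bd_M(\cdot,y)$ on $\pd M$.

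The remaining point is uniqueness of $\overrightarrow{xy}$: I must rule out the possibility that two different minimizers from $x$ to $y$ leave $x$ with different velocities while $bd_M$ is still differentiable at $(x,y)$. First, note that a minimizer in a manifold with boundary is $C^1$ and cannot leave $x\in\pd M$ tangentially into the manifold in a way that allows it to be shortened — more precisely, since $M$ is compact every pair of points is joined by a minimizer, and by the first-variation argument above every minimizer $\gamma$ from $x$ to $y$ has the same tangential projection $\operatorname{pr}_{T_x\pd M}\dot\gamma(0) = -\nabla(bd_M(\cdot,y))(x)$. Thus the only freedom left is the normal component $\langle\dot\gamma(0),\nu(x)\rangle\ge 0$; since $\dot\gamma(0)$ is a unit vector, the normal component is determined up to sign by the tangential component, and nonnegativity pins down the sign. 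Hence $\dot\gamma(0)$ is the same for all minimizers, i.e.\ $\overrightarrow{xy}$ is uniquely defined, and its tangential projection is the claimed gradient.

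The step I expect to be the main obstacle is making the first-variation inequality rigorous in the presence of the boundary: the comparison path from $c(t)$ to $y$ is not simply ``geodesic segment followed by $\gamma$'', because near $x$ the competitor may need to stay inside $M$, and $\gamma$ itself may run along $\pd M$ rather than being a genuine geodesic. The clean way around this is to use only the infinitesimal statement: take any $C^1$ variation $\Gamma(t,s)$ of $\gamma$ with $\Gamma(t,0)=c(t)$, $\Gamma(t,1)=y$, and $\Gamma(0,\cdot)=\gamma$, with $\Gamma(t,\cdot)$ admissible (contained in $M$); the length $L(t)$ of $\Gamma(t,\cdot)$ satisfies $L(t)\ge d_M(c(t),y)$ with equality at $t=0$, and the first variation formula for $C^1$ curves gives $L'(0^+)=-\langle u,\dot\gamma(0)\rangle$ (the interior and endpoint-at-$y$ boundary terms vanish because $\gamma$ is a minimizer, hence critical among admissible variations fixing endpoints — here one invokes the cited facts about minimizers in \cite{ABB}). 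Combining $d_M(c(\cdot),y)\le L(\cdot)$, equality at $0$, and differentiability of $bd_M$ at $(x,y)$ gives $\frac{d}{dt}\big|_0 bd_M(c(t),y)\le -\langle u,\dot\gamma(0)\rangle$, and replacing $u$ by $-u$ yields equality; this is the whole argument, and it avoids ever constructing an explicit shortened competitor.
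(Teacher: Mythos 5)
Your proposal is correct and follows essentially the same route as the paper's proof: the first-variation inequality $d_x\bigl(bd_M(\cdot,y)\bigr)(v)\le -\langle v,\dot\gamma(0)\rangle$ applied to both $v$ and $-v$, with differentiability forcing equality, and uniqueness of $\overrightarrow{xy}$ from the fact that a unit vector in $S^+_xM$ is determined by its tangential projection. The paper simply invokes the first variation formula as standard, whereas you spell out the boundary-related technicalities in more detail; this is the same argument, just more explicit.
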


\begin{proof}
This is standard. Denote $f=bd_M(\cdot,y)$
and let $\gamma$ be a shortest path from $x$ to~$y$.
Then the first variation formula
implies that for every $v\in T_x\pd M$ one has
$
 d_xf(v) \le -\langle v, \dot\gamma(0) \rangle
$.
Applying this to $v$ and $-v$ yields that
$
 d_xf(v) = -\langle v, \dot\gamma(0) \rangle
$
for all $v\in T_x\pd M$.
Hence the gradient of $-f$ at $x$ is the
projection of $\dot\gamma(0)$ to $T_x\pd M$.
Since $\dot\gamma(0)\in S_x^+M$, this vector
is uniquely determined by its projection to $T_x\pd M$.
\end{proof}

\begin{lemma}\label{l-bg-minimizing}
If $bd_M$ is differentiable away from the diagonal,
then every geodesic starting at the boundary is minimizing.
In particular, all such geodesics have length bounded above
by $\diam(M)$.
\end{lemma}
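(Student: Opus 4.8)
I argue by contradiction. Suppose some maximal geodesic $\gamma=\gamma_v\colon[0,\ell)\to M$ with $\gamma(0)=p\in\pd M$ is not minimizing on its whole domain. Since $M$ is compact, there is a uniform $\ep_0>0$ such that every geodesic issuing from $\pd M$ minimizes on $[0,\ep_0]$, so
$T:=\sup\{t:\gamma|_{[0,t]}\text{ is minimizing}\}\ge\ep_0>0$. Because pointwise limits of minimizers are minimizers, $\gamma|_{[0,T]}$ is itself minimizing, and by assumption $T<\ell$. If $\ell=\infty$ then $T<\infty$ as well: otherwise $d_M(p,\gamma(t))=t$ would be unbounded, contradicting $\diam(M)<\infty$. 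Finally, since a geodesic has no interior points on $\pd M$ and $0<T<\ell$, the point $q:=\gamma(T)$ lies in the interior of $M$.

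Next I localize the failure of minimality at $q$. For $t$ slightly larger than $T$ pick a minimizer $\sigma_t$ from $p$ to $\gamma(t)$; then $\mathrm{length}(\sigma_t)=d_M(p,\gamma(t))<t$, and $|\,\mathrm{length}(\sigma_t)-T\,|\le d_M(\gamma(t),q)\to0$. Letting $t\downarrow T$ and using that pointwise convergence of minimizers forces convergence of their tangents, a subsequence of the $\sigma_t$ converges to a minimizer $\sigma$ from $p$ to $q$ of length $T$. If $\sigma\ne\gamma|_{[0,T]}$, there are two distinct minimizers from $p$ to $q$. If $\sigma=\gamma|_{[0,T]}$, then $\dot\sigma_t(0)\to v$ while $\sigma_t$ is a geodesic near $p$ of length $<t$ ending at $\gamma(t)$, distinct from $\gamma|_{[0,t]}$; by the standard first‑variation/Jacobi‑field argument (at the interior point $q$ this is purely local and classical) $q$ is then conjugate to $p$ along $\gamma$. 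Thus in all cases $q$ is an interior point that is either a ``double minimizer'' point of $p$ or a conjugate point of $p$ along $\gamma$.

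The remaining, and genuinely hard, step is to propagate this singular behaviour out to $\pd M$ and contradict the hypothesis via Lemma~\ref{l-dbd} (differentiability of $bd_M$ at $(x,y)$ forces $\overrightarrow{xy}$ to be uniquely defined). The plan is to continue $\gamma$ beyond $q$ to its first return point $q^*\in\pd M$ (and, in the double‑minimizer case, to continue the second minimizer past $q$ as a geodesic as well), and to show that the focusing of the nearby geodesics through $q$ — the caustic in the conjugate case, the branching of minimizers in the other case — reaches $\pd M$ and produces boundary points where $bd_M$ fails to be differentiable, either because $\overrightarrow{p\,\cdot\,}$ cannot be chosen consistently near $q^*$ or because the boundary distance sphere develops a corner there. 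I expect this propagation to be the main obstacle: one must control \emph{where} the continued geodesics and minimizers first meet $\pd M$ and rule out that the singular structure is ``absorbed'' before reaching it; and when $\pd M$ is not convex one cannot assume minimizers are honest geodesics (they may run along or graze $\pd M$) nor that the exit time $\ell(\cdot)$ is continuous, so the argument must be carried out in terms of the genuine first‑return structure. This is exactly the technical core alluded to in the introduction, and it is where the bulk of the work lies; once the contradiction is reached, the ``in particular'' clause is immediate, since a minimizing curve in the compact manifold $M$ has length at most $\diam(M)$.
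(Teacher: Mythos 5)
Your argument stops exactly where the lemma begins. Everything up to the dichotomy at the cut point $q$ (double minimizer or conjugate point) is standard, but the hypothesis of the lemma controls only $bd_M$, i.e.\ the restriction of $d_M$ to $\pd M\times\pd M$; an interior cut or conjugate point of $d_M(p,\cdot)$ contradicts nothing by itself, and the step you defer --- ``propagate the singular behaviour out to $\pd M$'' --- is the entire content of the statement. Moreover, the plan as outlined has a concrete failure point: you propose to continue $\gamma$ beyond $q$ to ``its first return point $q^*\in\pd M$'', but a non-minimizing geodesic need not ever return to the boundary (trapped geodesics are a real possibility here; indeed, after this lemma the paper still has to deal separately, in Lemma~\ref{l-no-infinite-geodesics}, with geodesics that never hit $\pd M$, and it can do so only using convexity of $D$ and $C^1$-closeness of $bd_M$ to $bd_D$, not differentiability of $bd_M$ alone). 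There are also secondary issues in the boundary setting that you flag but do not resolve: the competing minimizers $\sigma_t$ may bend along $\pd M$, so even the classical ``conjugate point or two minimizers'' alternative at $q$, and the uniform short-time minimality $\ep_0$, require justification beyond the boundaryless theory.

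For comparison, the paper's proof avoids cut-locus analysis entirely and never needs the geodesic itself to reach the boundary. By Lemma~\ref{l-dbd}, differentiability of $bd_M$ away from the diagonal makes $f(x)=\overrightarrow{px}$ a well-defined continuous map $\pd M\setminus\{p\}\to S^+_pM$ (its value is read off from the gradient of $-bd_M(\cdot,x)$ at $p$). Near $p$ this map is uniformly close to the normalized inverse exponential map of $\pd M$, so its restriction to a small geodesic sphere in $\pd M$ has degree $1$ over any inward vector $v$ not tangent to $\pd M$; hence $v=\overrightarrow{pq}$ for some $q\in\pd M$, which means $\gamma_v$ is an initial segment of a minimizer from $p$ to $q$ and is therefore minimizing. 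Tangent initial vectors are then handled by approximating with non-tangent ones and passing to a limit of minimizers. If you want to salvage your approach you would have to supply precisely the missing propagation argument (including ruling out trapping), which is substantially harder than the degree-theoretic route.
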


\begin{proof}
Let $\gamma:[0,a]\to M$ be a geodesic with $\gamma(0)=p\in\pd M$.
First consider the case when the initial vector
$v:=\dot\gamma(0)$ is not tangent to $\pd M$.

Define a map $f:\pd M\setminus\{p\}\to S_p^+M$
by $f(x)=\overrightarrow{px}$. By the previous lemma,
this map is well-defined and hence continuous.
It is easy to see that
\begin{equation}
\label{e1}
\left|f(x)-u(\exp_{p,\pd M}^{-1}(x))\right|\to 0
\qquad\text{as $x\to p$},
\end{equation}
where $u:T_pM\setminus\{0\}\to S_pM$ is the normalization function
defined by $u(w)=w/|w|$, and $\exp_{p,\pd M}$ is the Riemannian exponential
map of $\pd M$ at~$p$ (restricted to a neighborhood of the origin where
it is injective).
Denote $\alpha=\angle(v,\pd M)$ and let $B$ be a small geodesic ball in $\pd M$
centered at $p$ such that the left-hand side of \eqref{e1} is less than $\alpha$
for all $x\in B$. Then $f|_{\pd B}$ is homotopic to $u\circ\exp_{p,\pd M}^{-1}|_{\pd B}$
as a map from $\pd B$ to $S^+_pM\setminus\{v\}$.
Since $u\circ\exp_{p,\pd M}^{-1}|_{\pd B}$ is a diffeomorphism
from $\pd B$ to the boundary of $S^+_pM$, it follows that
$f$ has degree~1 over~$v$. In particular, $f^{-1}(v)$ is nonempty.
Therefore there is a point $q\in\pd M$ such that $v=\overrightarrow{pq}$.
Then $\gamma$ is an interval of a shortest path from $p$ to $q$
and hence a minimizer.

Now consider the case when $v$ is tangent to the boundary.
Choose a sequence $\{v_i\}$ in the interior of $S^+_pM$
such that $v_i\to v$.
As shown above, the geodesics $\gamma_{v_i}$ are minimizing.
A limit of a subsequence of $\{\gamma_{v_i}\}$ is a minimizer
with endpoints at the boundary and with initial velocity~$v$.
Hence $\gamma$ is an interval of this limit, therefore
it is minimizing.
\end{proof}

Now assume that $M$ satisfies the assumptions of Theorem \ref{t-C0}
for a small $\delta>0$. We continue using the notations
$\ep(\delta)$ and $\approx$ defined in section \ref{sec-coord}.

First observe that the induced Riemannian metric on $\pd M$
at a point $p\in\pd M$ can be recovered from the first derivatives
of a function $bd_M(p,\cdot)$ near $p$. Indeed, for every tangent vector
$v\in T_p\pd M$ and a smooth curve
$\gamma:[0,1]\to\pd M$ with $\dot\gamma(0)=v$ one has
$$
 |v|=\lim_{t\to 0}d_{\gamma(t)} bd_M(p,\cdot) (\dot\gamma(t))
$$
where $|\cdot|$ is the norm defined by the Riemannian metric
and $d_{\gamma(t)}$ denotes the derivative at $\gamma(t)$.
This formula depends continuously on the derivatives of $bd_M$, hence
$$
 \bigl\|{g_M}_{|_{T\pd D}}-{g_{\R^n}}_{|_{T\pd D}}\bigr\|_{C^0} \le \ep(\delta)
$$
where $g_M$ denotes the metric tensor of $M$.

\begin{lemma}\label{l-no-infinite-geodesics}
Every non-minimizing geodesic stays within
distance $\ep(\delta)$ from $\pd M$.
\end{lemma}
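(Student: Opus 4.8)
The plan is to argue by contradiction, using the map $\phi$ and the estimates of Sections \ref{sec-coord}--\ref{sec-c0proof} together with Lemma \ref{l-bg-minimizing} (throughout Section \ref{sec-c1proof} the function $bd_M$ is $C^1$-close to $bd_D$, hence differentiable away from the diagonal, so that lemma applies). Suppose the conclusion fails: there is a fixed $\eta>0$ such that for arbitrarily small $\delta$ some $M$ as in Theorem \ref{t-C0} carries a non-minimizing geodesic $\gamma$ through a point $x_0$ with $\dist_M(x_0,\pd M)\ge\eta$. First extend $\gamma$ to a maximal geodesic $\tilde\gamma$; if an endpoint of $\tilde\gamma$ lay on $\pd M$, then $\tilde\gamma$ (or its reverse) would be minimizing by Lemma \ref{l-bg-minimizing}, hence $\gamma$ would be minimizing. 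So in fact $\tilde\gamma\colon\R\to M$ is a complete geodesic whose image avoids $\pd M$, and we may take $x_0=\tilde\gamma(0)$.

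Next I would transport the picture to $\R^n$. By Lemma \ref{l-dist-dm}, $\phi(x_0)$ lies at distance $\ge\eta-\ep(\delta)\ge\eta/2$ from $\pd D$, and by Lemma \ref{l-phi-almost-isometry} the map $\phi$ distorts distances by at most $\ep(\delta)$, so on scales of order $\eta\gg\ep(\delta)$ it behaves like an isometry near $x_0$. Let $[a,b]\ni 0$ be the maximal interval on which $\tilde\gamma$ is minimizing; it is nonempty because geodesics are locally minimizing, and bounded with $b-a\le\diam M\approx\diam D$ because a complete minimizing geodesic cannot exist in a compact manifold. Because $\tilde\gamma|_{[a,b]}$ is a shortest path, Lemma \ref{l-phi-almost-isometry} forces the triangle with vertices $\phi(\tilde\gamma(a))$, $\phi(\tilde\gamma(s))$, $\phi(\tilde\gamma(b))$ to be $\ep(\delta)$-degenerate for all $s$; hence $\phi\circ\tilde\gamma|_{[a,b]}$ stays within $\ep(\delta)$ of a straight unit-speed Euclidean segment through $\phi(x_0)$, and $\phi_v(\tilde\gamma(s))\approx\phi_v(x_0)+s$ there (with $v$ the direction of this segment) by \eqref{e-phiv}. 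A geodesic of $M$ cannot turn, but its $\phi$-image is confined to the bounded convex set $D$; so at an endpoint of $[a,b]$ the geodesic $\tilde\gamma$ reaches a cut point of the opposite endpoint, while the whole segment up to it still maps $\ep(\delta)$-close to one Euclidean segment.

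The core of the proof, and the step I expect to be the main obstacle, is to turn this refocusing into a contradiction with the two volume hypotheses of Theorem \ref{t-C0}. At the cut point either (a) there is a second shortest path $\beta$ between the same endpoints, distinct from $\tilde\gamma$, or (b) the endpoint is conjugate to the other along $\tilde\gamma$; when the injectivity radius at $x_0$ is small, (a) already occurs with a short geodesic loop at $x_0$ which stays deep inside $M$. In case (a), $\phi\circ\beta$ traces the same Euclidean segment as $\phi\circ\tilde\gamma$, so by Lemma \ref{l-phi-almost-injective} the two arcs remain uniformly close in $M$ although distinct, and I would now distinguish two regimes. If somewhere deep inside $M$ the two arcs are separated by a definite distance (comparable to $\eta$), then a tube of that radius around $\tilde\gamma\cup\beta$ is a deep set on which $\phi$ is at least two-to-one, or has degree $0$, so $\vol(\phi(\text{tube}))\le\tfrac12\vol(\text{tube})$, contradicting \eqref{e-phi-vol} exactly as in the proof of Lemma \ref{l-curve}; assumption (3) guarantees the tube has volume at least of order $\eta^n$, so this is a genuine contradiction. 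If instead the arcs stay very close everywhere, then $\tilde\gamma$ runs for a long arc (of length bounded below in terms of $\eta$ and $\diam D$) inside a thin deep tube, and a metric ball of radius of order $\eta$ around a point of that arc is disjoint from $\pd M$ yet has volume far below $\lambda\eta^n$, contradicting assumption (3). Case (b) reduces to (a) by the standard perturbation of a conjugate cut point into a pair of minimizers, or is treated directly since a conjugate point also forces neighbouring geodesics to refocus into the same folded region. The delicate points throughout are the quantifier bookkeeping --- fix $\eta$, let $\delta\to0$, and work on a fixed scale of order $\eta$ where the additive error $\ep(\delta)$ is negligible --- the localization of the offending configuration to a deep region rather than merely its deep middle, and ruling out the possibility that $\tilde\gamma$ is deep only on a very short interval; this last point is exactly where the ball-volume lower bound of assumption (3) enters, mirroring the role of the cylinder example.
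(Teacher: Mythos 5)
Your proposal has a decisive logical flaw: it is circular in the context where this lemma lives. The lemma is stated inside the proof of Theorem \ref{t-C1}, where the only hypothesis on $M$ is the $C^1$-closeness of $bd_M$ to $bd_D$; its entire purpose is to \emph{establish} hypotheses (2) and (3) of Theorem \ref{t-C0} (via Proposition \ref{p-isoembolic} for the ball-volume bound and Santal\'o's formula for the total volume). Your argument, however, invokes the machinery of Sections \ref{sec-coord}--\ref{sec-c0proof} --- the map $\phi$, \eqref{e-phi-vol}, Lemmas \ref{l-dist-dm}, \ref{l-curve}, \ref{l-phi-almost-injective}, \ref{l-phi-almost-isometry} --- and explicitly ``the two volume hypotheses of Theorem \ref{t-C0}'' and ``assumption (3)''. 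None of that is available here: all of it is proved only under the assumptions of Theorem \ref{t-C0}, which at this stage are exactly what remains to be verified. Beyond the circularity, several steps are unjustified even if one granted those tools: in your ``arcs stay very close everywhere'' regime, closeness of two minimizers gives no upper bound on the volume of an $\eta$-ball, so the claimed contradiction with assumption (3) does not follow; the conjugate-point case is waved away (at a conjugate cut point there need not be a second minimizer, and perturbation yields only almost-minimizers, which breaks the degree argument); and the ``two-to-one, hence $\vol(\phi(\text{tube}))\le\tfrac12\vol(\text{tube})$'' step needs the careful component/degree bookkeeping of Lemma \ref{l-curve}, not just local multiplicity. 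Note also that your argument never uses strict convexity of $D$, which the paper points out (in the concluding remarks) is essential precisely for this lemma.

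The paper's proof works with the boundary data directly and uses no volume arguments. Given a non-minimizing geodesic through $p$ (which by Lemma \ref{l-bg-minimizing} never meets $\pd M$ and so extends indefinitely), one looks at the set $Z\subset S_pM$ of directions whose geodesics do hit $\pd M$: it is closed, nonempty, and proper, so it has a boundary direction $v$, and $\gamma_v$ reaches $\pd M$ tangentially at a point $q$; extending backwards through $p$ gives a minimizer from $q$ to another boundary point $s$ whose initial vector $\overrightarrow{qs}$ is tangent to $\pd M$. By Lemma \ref{l-dbd}, $\|d_q\,bd_M(s,\cdot)\|=1$; the $C^1$-closeness of $bd_M$ to $bd_D$ (and the resulting $C^0$-closeness of the boundary metric tensors) transfers this to $D$, so the Euclidean chord $[qs]$ is almost tangent to $\pd D$, hence short by strict convexity. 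Then $d_M(q,s)\approx|q-s|\approx 0$, and since $p$ lies on the minimizer from $q$ to $s$, $\dist_M(p,\pd M)<\ep(\delta)$. If you want to salvage your write-up, this tangency-plus-convexity mechanism is the missing idea; the $\phi$-based volume arguments cannot be used at this point of the paper.
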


\begin{proof}
By Lemma \ref{l-bg-minimizing}, a non-minimizing geodesic
never hits the boundary and therefore can be extended
to infinite length.
Let $\gamma$ be a geodesic parametrized by $[0,+\infty)$ and
$p=\gamma(0)$; we are to prove that $\dist_M(p,\pd M)<\ep(\delta)$.
Consider the set $Z$ of all vectors $v\in S_pM$ such that
the geodesic $\gamma_v$ eventually hits $\pd M$.
By Lemma \ref{l-bg-minimizing}, lengths of these geodesics
are bounded above by $\diam(M)$, therefore $Z$ is closed.
Obviously $Z\ne\emptyset$, and $Z\ne S_pM$ since $\dot\gamma(0)\notin Z$.
Hence the topological boundary of $Z$ in $S_pM$ is nonempty.
Let $v\in Z$ be a vector from this boundary.
Then $\gamma_v$ is tangent to $\pd M$ at its endpoint
$q=\gamma_v(\ell(v))$. Extend $\gamma_v$ backwards until
it hits the boundary at a point $s\in\pd M$.
(By Lemma \ref{l-bg-minimizing},
the backward extension cannot have infinite length
since it starts at $q\in\pd M$.)

Since $\overrightarrow{qs}$ is tangent to the boundary,
Lemma \ref{l-dbd} implies that
$\|d_q bd_M(s,\cdot)\|=1$ where the norm is
taken with respect to the metric of~$\pd M$. Since
$bd_D$ is $C^1$-close to $bd_M$ and the metric tensors
of $M$ and $D$ are $C^0$-close at the boundary,
it follows that $\|d_q bd_D(s,\cdot)\|\approx 1$ where
the norm is taken with respect to the Riemannian metric
on $\pd D$ induced from $\R^n$.
Applying Lemma \ref{l-dbd} to $D$ yields that the
straight line segment $[qs]$ forms almost zero angle
with $\pd D$ and hence $|q-s|<\ep(\delta)$. Thus
$$
 d_M(q,s)\approx d_D(q,s)\approx 0 .
$$
Since $p$ lies on a shortest path from $q$ to $s$ in $M$,
this implies that
$$
\dist_M(p,\pd M)\le d_M(q,p)<d_M(q,s)<\ep(\delta)
$$
and the lemma follows.
\end{proof}

We are going to show that $M$ satisfies the assumptions
of Theorem \ref{t-C0} for $\ep(\delta)$ in place of~$\delta$.
The first assumption in Theorem \ref{t-C0} is
satisfied trivially.

Denote the value $\ep(\delta)$ from Lemma \ref{l-no-infinite-geodesics}
by $\rho$ and let $M'=M\setminus U_\rho(\pd M)$.
Lemma \ref{l-no-infinite-geodesics} implies that all geodesics
in $M'$ are minimizing. Hence the injectivity radius at every
point $x\in M'$ is no less than $\dist_M(x,\pd M')$. This fact and
Proposition \ref{p-isoembolic} imply that
$$
\vol(B_r(x)) \ge c(n) r^n
$$
for all $x\in M'$ and $r\le\dist_M(x,\pd M')$.
If $r\ge 2\rho$ and $B_r(x)\cap\pd M=\emptyset$,
we have $x\in M'$ and $\dist(x,\pd M')\ge r/2$, hence
$$
 \vol(B_r(x)) \ge \vol(B_{r/2}(x)) \ge c(n) (r/2)^n = 2^{-n}c(n) r^n.
$$
Thus $M$ satisfies the third requirement of Theorem \ref{t-C0}
for $2\rho$ in place of $\delta$ and $\lambda=2^{-n}c(n)$.

In order to estimate the volume of $M'$ we use
Santal\'o's formula (Proposition \ref{p-santalo}).
Let $V_M\subset SM$ be the set of all unit tangent vectors $v$
such that the geodesic $\gamma_{-v}$ eventually hits $\pd M$.
Applying Proposition \ref{p-santalo} to the set
$A=\bigcup_{p\in\pd M} S^+_pM$ yields
\be\label{e-santalo}
 \mu_L(V_M) = \int_{\pd M} d\vol_{\pd M}(p)
 \int_{S^+_pM} \ell(v) \cos\angle(v,\nu(p))\, d\vol_{S_pM}(v)
\ee
where $\nu(p)$ is the inner normal to $\pd M$ at $p$.
Let us compare the inner integral (for a fixed $p\in\pd M$)
with the similar integral for~$D$.
Let $I\colon T_pM\to T_pD$ be a linear isometry which preserves
the tangent space to the boundary and is $\ep(\delta)$-close
to the identity on it. (Such a map exists since
the metric tensors of $M$ and $D$ are close to each other on $\pd D$.)
Let $v\in S^+_pM\setminus T_p\pd M$ and $q=\gamma_v(\ell(v))$.
By Lemma \ref{l-dbd}, the $\pd M$-gradient of $-bd_M(q,\cdot)$ at $p$
equals the projection of $v$ to $T_p\pd M$.
Let $v'=I(v)\in S^+_pD\setminus T_p\pd D$
and $q'$ be the point where the ray
$\{p+tv':t>0\}$ intersects $\pd D$.
Note that $v$ and $v'$ form the same angle with the
inner normals to $\pd M$ and $\pd D$ respectively.
By Lemma \ref{l-dbd} applied to $D$, the
$\pd D$-gradient of $-bd_D(q',\cdot)$ at $p$
equals the (Euclidean) projection of $v'$ to $T_p\pd D$.
By the choice of $I$, this projection is close to
the above projection of~$v$,
therefore $d_pbd_M(q,\cdot)\approx d_pbd_D(q',\cdot)$.
Since $bd_M$ is $C^1$-close to $bd_D$, we also have
$d_pbd_M(q,\cdot)\approx d_pbd_D(q,\cdot)$.

Thus $d_pbd_D(q,\cdot)\approx d_pbd_D(q',\cdot)$.
These two derivatives determine the Euclidean directions
from $p$ to $q$ and $q'$ by means of Lemma \ref{l-dbd}.
This implies that $q\approx q'$ and therefore
$$
 \ell_M(v) = d_M(p,q)\approx |p-q| \approx |p-q'| = \ell_D(v') .
$$
This and \eqref{e-santalo} imply that $\mu_L(V_M)\approx\mu_L(V_D)$.
Observe that
$$
 \mu_L(V_D) = \omega_{n-1}\vol(D)
$$
where $\omega_{n-1}$ is the volume of the unit sphere in $\R^n$,
and
$$
 \omega_{n-1} \vol(M') \le \mu_L(V_M) \le \omega_{n-1} \vol(M)
$$
since $SM'\subset V_M\subset SM$. It follows that
$$
 \vol(M') \le \frac{\mu_L(V_M)}{\omega_{n-1}} \approx \frac{\mu_L(V_D)}{\omega_{n-1}} =\vol(D),
$$
thus $M'$ satisfies the second requirement of Theorem \ref{t-C0}
with $\ep(\delta)$ in place of $\delta$.

Thus $M$ satisfies the three requirements of Theorem \ref{t-C0}
for $\ep(\delta)$ in place of $\delta$ and some $\lambda$
depending only on~$n$. Hence $d_{GH}(M,D)<\ep(\ep(\delta))=\ep(\delta)$
by Theorem \ref{t-C0}.
This completes the proof of Theorem \ref{t-C1}.

\section{Concluding remarks and open questions}

\subsection{}
Combining the proof in this paper with technique from
\cite{BI10} and \cite{BI-new}, one can generalize the theorems to the
case when $D$ is  a region in $\mathbb H^n$,
or, more generally, a region with a Riemannian metric $C^3$-close
to the Euclidean or the hyperbolic one. To prove these generalizations,
replace the map $\phi$ in section \ref{sec-coord}
and the map $f$ in the proof of Lemma \ref{l-aux1} by
area-contracting maps constructed in \cite{BI-new}.
(The construction in \cite{BI-new} is in many ways similar to the one
in the proof of \eqref{e-phiv}; however it uses
an auxiliary map to $L^\infty(S^{n-1})$ rather than $\R^{2n}$.)

\subsection{}
It is interesting whether one can remove the assumption that $D$ is convex.
Convexity of $D$ is used in section \ref{sec-c0proof} and
in Lemma \ref{l-no-infinite-geodesics}.
The former seems easy to work around but the latter presents
more of a problem. Estimating the total volume by means
of Santal\'o's formula would not work if a significant
portion of the unit tangent bundle is covered by geodesics
that never hit the boundary. On the other hand, typical examples
where such geodesics are present have non-smooth boundary
distance functions. This raises the following question.

\begin{question}
Let $M$ be a compact Riemannian manifold with nonempty boundary
whose boundary distance function is differentiable away from the
diagonal.
Is it true that $M$ is non-trapping (that is, there are no geodesics
of infinite length)?

By Lemma \ref{l-bg-minimizing}, an affirmative answer
would imply that all geodesics in $M$ are minimizing.
Then one could ask whether the same is true for all locally
minimizing curves (i.e.\ geodesics of the length metric
rather than Riemannian geodesics).
\end{question}

\subsection{}
Another interesting question is whether
the third assumption in Theorem \ref{t-C0}
can be replaced by the following: every metric ball
of radius $r$ in $M$ (sufficiently separated away from the boundary)
is contractible within a ball of radius $\rho(r)$
where $\rho:[0,+\infty)\to[0,+\infty)$
is a fixed function such that $\rho(r)\to 0$ as $r\to 0$.
As shown in \cite{GP}, volumes of $r$-balls (separated
away from the boundary) in such $M$ are uniformly bounded below by
$\nu=\nu_{\rho}(r)>0$. This is similar to the third assumption
of Theorem \ref{t-C0} except that $\nu_\rho(r)$ is not of
the form $\lambda r^n$. (This form of a volume bound is
used in Lemma \ref{l-phi-almost-injective}.)

It is easy to see that a class of Riemannian manifold with a
uniform lower bound on volumes of balls depending only on radius,
and uniformly bounded diameter and total volume, is pre-compact
in Gromov--Hausdorff topology. Therefore a sequence of manifolds
satisfying assumptions (1) and (2) of Theorem \ref{t-C0} and
the above uniform local contractibility assumption, must have
a partial Gromov--Hausdorff limit. One could try to equip this
limit with a structure allowing one to analyze the equality case
in Besikovitch inequality. Such a structure would certainly have
applications beyond this particular question.

\end{document}